\numberwithin{equation}{section}
\newtheorem{theorem}{Theorem}[section]
\newtheorem{lemma}[theorem]{Lemma}
\newtheorem{proposition}[theorem]{Proposition}
\newtheorem{corollary}[theorem]{Corollary}
\theoremstyle{definition}
\newtheorem{definition}[theorem]{Definition}
\newtheorem{remark}[theorem]{Remark}
\newtheorem{conjecture}[theorem]{Conjecture}
\newtheorem{example}[theorem]{Example}
\DeclareMathOperator{\trace}{Tr}
\DeclareMathOperator{\diag}{diag}
\DeclareMathOperator{\spans}{span}
\DeclareMathOperator{\convex}{co}
\DeclareMathOperator{\range}{ran}
\DeclareMathOperator{\dom}{dom}
\DeclareMathOperator{\rank}{rank}
\DeclareMathOperator{\supp}{supp}
\begin{document}

\title{Majorization and a Schur--Horn Theorem \\ for positive compact operators, \\ the nonzero kernel case}

\author[uc]{Jireh Loreaux\fnref{fn1}}
\ead{loreaujy@mail.uc.edu}
\ead{loreaujy@gmail.com}

\author[uc]{Gary Weiss\corref{cor}\fnref{fn2}}
\ead{gary.weiss@uc.edu}
\ead{weissg@ucmail.uc.edu}
\ead{gary.weiss@math.uc.edu}

\fntext[fn1]{Partially supported by funds from the Charles Phelps Taft Research Center.}
\fntext[fn2]{Partially supported by 
Simons Foundation Collaboration Grant for Mathematicians \#245014 and the Charles Phelps Taft Research Center.}
\cortext[cor]{Corresponding author}

\address[uc]{Department of Mathematical Sciences, 4199 French Hall West \\
University of Cincinnati,
2815 Commons Way \\
Cincinnati, OH 45221-0025, USA}

\begin{keyword}
Schur--Horn Theorem \sep majorization \sep diagonals \sep stochastic matrices 

\MSC[2010] Primary 26D15 \sep 47B07 \sep 47B65 \sep 15B51 \sep Secondary 47A10 \sep 47A12 \sep 47L07
\end{keyword}

\begin{abstract}
Schur--Horn theorems focus on determining the diagonal sequences obtainable for an operator under all possible basis changes,
formally described as the range of the canonical conditional expectation of its unitary orbit.

Following a brief background survey, we prove an infinite dimensional Schur--Horn theorem for positive compact operators with infinite dimensional kernel, one of the two open cases posed recently by Kaftal--Weiss.
There, they characterized the diagonals of operators in the unitary orbits for finite rank or zero kernel positive compact operators.
Here we show how the characterization problem depends on the dimension of the kernel when it is finite or infinite dimensional.

We obtain exact majorization characterizations of the range of the canonical conditional expectation of the unitary orbits of positive compact operators with infinite dimensional kernel,
unlike the approximate characterizations of Arveson--Kadison, but extending the exact characterizations of Gohberg--Markus  and Kaftal--Weiss.

Recent advances in this subject and related subjects like traces on ideals show the relevance of new kinds of sequence majorization as in the work of Kaftal--Weiss (e.g., strong majorization and another majorization similar to what here we call $p$-majorization), and of Kalton--Sukochev (e.g., uniform Hardy--Littlewood majorization), and of Bownik--Jasper (e.g.,  Riemann and Lebesgue majorization).
Likewise key tools here are new kinds of majorization, which we call $p$- and  approximate $p$-majorization ($0\le p\le \infty$).
\end{abstract}

\maketitle

\section{Introduction}\label{sec:introduction}

The Schur--Horn Theorem in finite matrix theory characterizes the diagonals of a self-adjoint $n\times n$ matrix in terms of its eigenvalues.
In particular, if $\lambda_1,\ldots,\lambda_n$ are the eigenvalues of a self-adjoint matrix counting multiplicity, then its diagonal sequence $d_1,\ldots,d_n$ has the following relationship with its eigenvalues:
\[ \sum_{i=1}^m d^*_i \le \sum_{i=1}^m \lambda^*_i,\quad\text{for } 1\le m\le n,\text{ and}\quad \sum_{i=1}^n d_i = \sum_{i=1}^n \lambda_i, \]
where $d^*,\lambda^*$ are any monotone decreasing rearrangements of $d,\lambda$.
This relationship between the sequences $d,\lambda\in \mathbb{R}^n$ is called majorization and is historically denoted by $d\prec\lambda$.
Schur proved this diagonal-eigenvalue relationship in \cite{Sch-1923-SBMG} and Horn \cite{Hor-1954-AJoM} proved the converse.
That is, Horn proved that given $d\prec\lambda$, there exists a self-adjoint $n\times n$ matrix with eigenvalue  sequence $\lambda$ and diagonal sequence $d$. 

To modernize this perspective, let $H$ denote a Hilbert space (finite or separable infinite dimensional) and fix an orthonormal basis $\{e_n\}_{n=1}^N$ for $H$ ($1\le N\le \infty$).
 Denote by $\mathcal{D}$ the abelian algebra of diagonal operators  (the canonical atomic masa of $B(H)$) corresponding to the basis $\{e_n\}_{n=1}^N$ and $\mathcal{D}_{sa}$ the self-adjoint operators in $\mathcal{D}$.
Given an operator $X\in B(H)$, we denote by $E(X)$ the diagonal operator having as its diagonal the main diagonal of $X$ (i.e., $E:B(H)\to\mathcal{D}$ is the canonical faithful normal trace-preserving conditional expectation).
Let $\mathcal{U}(H)$ be the full unitary group of $B(H)$, and given an operator $X\in B(H)$ let $\mathcal{U}(X)$ denote the orbit of $X$ under $\mathcal{U}(H)$ acting by conjugation $X\mapsto UXU^*$.
With this notation we can state the classical Schur--Horn Theorem (\cite{Hor-1954-AJoM,Sch-1923-SBMG}) in a form which translates  naturally to the infinite dimensional case (see for instance Theorems~\ref{thm:kwpartialisometryorbit},~\ref{thm:kwrangeprojectionidentity} and Corollary~\ref{cor:conjecturetrueforinfinitemo}).
\begin{theorem}[Classical Schur--Horn Theorem \protect{\cite{Sch-1923-SBMG,Hor-1954-AJoM}}]
  \label{thm:schur-horn}
  Let $H$ be a finite dimensional complex Hilbert space and $\mathcal{D}$ a masa of $B(H)$ $(\cong M_n(\mathbb{C})$ relative to a fixed basis corresponding to $\mathcal{D})$ with conditional expectation $E:B(H)\to\mathcal{D}$.
Then for any self-adjoint operator $A\in B(H)$, 
  \[ E(\mathcal{U}(A)) = \{ B\in\mathcal{D}_{sa} \mid \lambda(B)\prec\lambda(A) \}, \]
where $\lambda(A),\lambda(B)$ denote the eigenvalue sequences of $A,B$ counting multiplicity.
\end{theorem}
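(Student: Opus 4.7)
My plan is to prove the theorem by establishing the two inclusions separately. The forward inclusion $E(\mathcal{U}(A))\subseteq\{B\in\mathcal{D}_{sa}\mid\lambda(B)\prec\lambda(A)\}$ is Schur's original direction, and the reverse inclusion is Horn's converse; the bulk of the work lies in Horn's direction.

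For Schur's inclusion, I would fix $U\in\mathcal{U}(H)$ and diagonalize $A=V\Lambda V^*$ where $\Lambda=\diag(\lambda_1,\ldots,\lambda_n)$. Writing $W=U V$ and letting $S_{ij}=|w_{ij}|^2$, a direct computation shows that the diagonal of $UAU^*$ is given by $d_i=\sum_j S_{ij}\lambda_j$, and that the matrix $S=(S_{ij})$ is doubly stochastic since the rows and columns of the unitary $W$ have unit norm. By the Hardy--Littlewood--P\'olya theorem, applying a doubly stochastic matrix to $\lambda$ produces a vector majorized by $\lambda$, so $d\prec\lambda$. Equality of the full sums is automatic from $\trace(UAU^*)=\trace(A)$.

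For Horn's inclusion, given $d\prec\lambda$ in $\mathbb{R}^n$, I would construct a self-adjoint matrix in $\mathcal{U}(\diag\lambda)$ with diagonal $d$. The cleanest approach I know is an inductive one via two-level rotations (so-called $T$-transforms): since $d\prec\lambda$, one can show there exist indices $i,j$ and a convex combination such that a single plane rotation of $\diag\lambda$ replaces two of its diagonal entries with one entry of $d$ and a new value, leaving a smaller majorization problem $d'\prec\lambda'$ in dimension $n-1$. Iterating produces a product of plane rotations $U$ with $E(U(\diag\lambda)U^*)=\diag d$. Alternatively, one can invoke Birkhoff's theorem to write the doubly stochastic matrix realizing $d$ as a convex combination of permutation matrices, and then orthostochastically lift this to a unitary; I would choose the $T$-transform route since it is constructive and makes the unitary explicit.

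The main obstacle is the inductive step of Horn's direction: verifying that from any strict majorization $d\prec\lambda$ one can always locate a pair of eigenvalues $\lambda_i,\lambda_j$ and a target diagonal entry $d_k$ with $\lambda_i\le d_k\le\lambda_j$ (or the reverse) so that a single plane rotation consumes $d_k$ while preserving majorization of the remaining entries. This requires a careful pigeonhole argument on the partial sums of ordered rearrangements to guarantee that the reduced data still satisfies $d'\prec\lambda'$; once this combinatorial lemma is in hand, the induction base case $n=1$ is trivial and the rotation is explicit, so the theorem follows.
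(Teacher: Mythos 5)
The paper states Theorem~\ref{thm:schur-horn} as the classical Schur--Horn result and simply cites Schur for the forward direction and Horn for the converse; it contains no proof of this statement, so there is no internal argument to compare your approach against. On its own terms, your sketch is essentially the standard and correct proof, with two points that warrant care.

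For Schur's inclusion your argument is fine: $UAU^*=W\diag\lambda\,W^*$ with $W=UV$ unitary has diagonal $S\lambda$ where $S_{ij}=|w_{ij}|^2$ is doubly stochastic, and Hardy--Littlewood--P\'olya plus trace preservation give $d\prec\lambda$. For Horn's inclusion, the $T$-transform route you settle on is the right one, but the Birkhoff alternative you float should be discarded: Birkhoff decomposes a doubly stochastic $S$ with $d=S\lambda$ into a convex combination of permutations, yet for $n\ge3$ not every doubly stochastic matrix is unistochastic, let alone orthostochastic, so there is no ``lift'' to a unitary from that decomposition. Also, the claim that ``iterating produces a product of plane rotations $U$ with $E(U\diag\lambda\,U^*)=\diag d$'' is true only if the rotations are nested, because $E$ does not commute with conjugation, so one cannot simply compose $T$-transforms at the sequence level. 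The correct bookkeeping: after rotating the pair with $\lambda_{k-1}\ge d_1\ge\lambda_k$ so that $d_1$ appears on the diagonal, the single nonzero off-diagonal entry produced lies in the row and column you then delete, so the remaining $(n-1)\times(n-1)$ principal block is again diagonal, namely $\diag(\lambda_1,\ldots,\lambda_{k-2},\,\lambda_{k-1}+\lambda_k-d_1,\,\lambda_{k+1},\ldots,\lambda_n)$; the inductive orthogonal $R'$ for this subproblem is then padded to $1\oplus R'$ and composed with the first rotation. Your identified obstacle --- the combinatorial lemma that the reduced data still majorize --- is indeed the crux, and it does hold with this choice of adjacent indices.
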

Since the advent of the Schur--Horn theorem, there has been significant progress towards developing infinite dimensional analogues.
This was perhaps started by the work of Markus \cite{Mar-1964-UMN} and Gohberg and Markus \cite{GM-1964-MSN}, but more recently the topic was revived by A. Neumann in \cite{Neu-1999-JoFA}.
However, Neumann studied an approximate Schur--Horn phenomenon, i.e., the operator-norm closure of the diagonals of bounded self-adjoint operators  (equivalently, the $\ell^\infty$-norm closure of the diagonal sequences), which Arveson and Kadison deemed too coarse a closure \cite[Introduction paragraph 3]{AK-2006-OTOAaA}.
Instead, they studied the expectation of the trace-norm closure of the unitary orbit of a trace-class operator and then proved Schur--Horn analogues for trace-class operators in $B(H)$ (type I${}_\infty$ factor).
They also formulated a Schur--Horn conjecture for type II${}_1$ factors, but discussion of this topic is outside the scope of this paper.
For work on II${}_1$ and II${}_\infty$ factors, see the work of Argerami and Massey \cite{AM-2007-IUMJ,AM-2008-JMAA,AM-2013-PJM}, Bhat and Ravichandran \cite{BR-2014-PotAMS} and a recent unpublished work Ravichandran \cite{Rav-2012}.

Majorization plays an essential role in Schur--Horn phenomena, but also has given rise to new kinds of majorization inside and outside this arena. The Riemann and Lebesgue majorizations of Bownik--Jasper are applied to Schur--Horn phenomena in \cite{BJ-2013,BJ-TotAMS,Jas-2013-JoFA}. And the uniform Hardy--Littlewood majorization of Kalton--Sukochev \cite{KS-2008-JfudRuAM} is used ubiquitously in Lord--Sukochev--Zanin \cite{LSZ-2013} as an essential tool to study traces and commutators for symmetrically normed ideals. In this paper we also develop new kinds of majorization essential for our work (see introduction Figs. 1 and 2 and accompanying description). 

\paragraph{Basic notation for this paper.} 
For a set $S$,  let $\abs{S}$ denote its cardinality.
Let $c_0^+$ denote the cone of nonnegative sequences converging to zero and $c_0^*$ the cone of nonnegative decreasing sequences converging to zero.
For a sequence $\xi\in c_0^+$, let $\xi^*\in c_0^*$ denote the monotonization of $\xi$, or rather the monotonization of $\xi\vert_{\supp{\xi}}$ when $\xi$ is not finitely supported.
That is, $\xi^*_j$ denotes the $j$-th largest element of $\xi$.
Notice that if $\xi$ is finitely supported, then $\xi^*$ ends in zeros.
However, if $\xi$ has infinite support, then $\xi^*$ has \emph{no} zeros, and in this  case the monotonization $\xi^*$ reflects neither the zeros of $\xi$ nor their multiplicity. 

The following Definition~\ref{def:majorization} agrees with most of the literature, but it is a departure from that of \cite{KW-2010-JoFA} which did not include this equality condition.
When they needed an equality-like condition, they used instead the more restrictive Definition~\ref{def:strong-majorization} of strong majorization. 

\begin{definition}
  \label{def:majorization}
  Let $\xi,\eta\in c_0^+$.
One says that $\xi$ is \emph{majorized} by $\eta$, denoted $\xi\prec\eta$, if for all $n\in\mathbb{N}$,
\[ \sum_{j=1}^n \xi^*_j \le \sum_{j=1}^n \eta^*_j\quad\text{and}\quad \sum_{j=1}^\infty \xi_j = \sum_{j=1}^\infty \eta_j. \]
\end{definition}

\begin{definition}[\protect{\cite[Definition 1.2]{KW-2010-JoFA}}]
  \label{def:strong-majorization}
  Let $\xi,\eta\in c_0^+$.
One says that $\xi$ is \emph{strongly majorized} by $\eta$, denoted $\xi\preccurlyeq\eta$, if for all $n\in\mathbb{N}$,
\[ \sum_{j=1}^n \xi^*_j \le \sum_{j=1}^n \eta^*_j\quad\text{and}\quad \liminf_n\left\{\sum_{j=1}^n (\eta^*_j-\xi^*_j)\right\} = 0. \]
\end{definition}

Note that when $\xi\prec\eta\in\ell^1$, so is $\xi\in\ell^1$, and in this $\eta$-summable case, majorization \emph{as defined above} in Definition~\ref{def:majorization} is equivalent to strong majorization in Definition~\ref{def:strong-majorization}.
However, in the nonsummable case, the latter is clearly a stronger constraint than the former.
Strong majorization is not an essential tool in the main theorems of this paper, but we thought it important to emphasize the distinction between our definition of majorization and those of Kaftal--Weiss just described above.

The reason for our Definition~\ref{def:majorization} departure from that of Kaftal--Weiss is for convenience, efficiency of notation and unification of cases.
This notation allows us to state in a more unified way the results for both trace-class \emph{and} non trace-class operators simultaneously without splitting the conclusions into cases (compare Theorem~\ref{thm:kworthostochastic} to the two cases in \cite[Corollary 5.4]{KW-2010-JoFA}). 

\paragraph{Recent History.} 
In \cite{KW-2010-JoFA}, Kaftal and Weiss provided an exact extension of the Schur--Horn Theorem to positive compact operators,  i.e., precise characterizations without taking closures of any kind.
That is, in terms of majorization they characterize precisely the expectation of the unitary orbit of strictly positive compact operators and the expectation of the partial isometry orbit for all positive compact operators.
And they ask for but leave as an open question a characterization of the expectation of the unitary orbit of positive compact operators with nonzero kernel.

To describe this subject requires some traditional preliminaries.
The range projection $R_A$ for operators $A\in B(H)$ is the orthogonal projection onto $\overline{\range A}=\ker^\perp A^*$.
Thus for a self-adjoint operator $A$, $R_A^\perp$ is the projection onto $\ker A$ and hence $\trace R_A^\perp=\dim\ker A$, and in general $\trace R_A=\rank A$.
Throughout this paper we opt for using $\trace R_A$ and $\trace R_A^\perp$ instead of $\dim\ker A$ and $\rank A$. 
  \begin{definition}
    \label{def:partial-isometry-orbit}
    Given an operator $A\in B(H)$, the \emph{partial isometry orbit} of $A$ is the set
    \[ \mathcal{V}(A) = \{ VAV^* \mid V\in B(H), V^*V = R_A\vee R_{A^*} \}. \]
  \end{definition}

\noindent Notice this extends to partial isometries the standard notation of unitary orbits 
$\mathcal{U}(A) = \{ UAU^* \mid \text{unitary $U\in\mathcal{U}(H)$} \}$. 

Stochastic matrices play a central role in this subject due to the following definition and lemma. 

\begin{definition}\label{def:stochastics}
  A matrix $P$ with positive entries is called 
  \begin{itemize}[itemsep=0pt]
  \item {\emph{substochastic} if its row and column sums are bounded by 1;}
  \item {\emph{column-stochastic} if it is substochastic and its column sums equal 1;}
  \item {\emph{row-stochastic} if it is substochastic and its row sums equal 1;}
  \item {\emph{doubly stochastic} if it is row- and column-stochastic;}
  \item {\emph{unistochastic} if it is the Schur-product of a unitary matrix with its complex conjugate
      \hfill \\
      \hphantom{\quad}(the \emph{Schur-product} of two matrices $A = (a_{ij})$ and $B = (b_{ij})$ is the matrix $(a_{ij} b_{ij})$, that is, it is the entrywise product of $A,B$);
    }
    \item {\emph{orthostochastic} if it is the Schur-square of an orthogonal matrix, i.e., unitary with real entries.}
  \end{itemize}
\end{definition}
 
\noindent And the connection between expectations of orbits and stochastic matrices is:

\begin{lemma}[\protect{\cite[Lemmas 2.3, 2.4]{KW-2010-JoFA}}] \label{lem:kwstochasticcontraction}
  Let $\xi,\eta\in\ell^\infty$ and for any contraction $L=(L_{ij})\in B(H)$, let $Q_{ij} \coloneqq{} \left| L_{ij} \right|^2$ for all $i,j$.
Then 
\[\text{$\xi=Q \eta$ \quad if and only if \quad $\diag\xi = E(L\diag\eta L^*)$.} \]
Furthermore, 
  \begin{enumerate}[itemsep=0pt, label=\emph{(\roman*)}, labelwidth=1em, align=left] 
  \item {$Q$ is substochastic;}
  \item {$L$ is an isometry if and only if $Q$ is column-stochastic;}
  \item {$L$ is a co-isometry (isometry adjoint) if and only if $Q$ is row-stochastic;}
  \item {$L$ is unitary if and only if $Q$ is unistochastic;} 
  \item {$L$ is orthogonal if and only if $Q$ is orthostochastic.} 
  \end{enumerate}
\end{lemma}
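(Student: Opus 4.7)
The plan is to reduce the lemma to a direct matrix computation of the diagonal, and then to read off the five conditions from the operator-theoretic structure of $L^*L$ and $LL^*$. First, for any $L=(L_{ij})$ and any $\eta$, I would expand
\[ (L\diag\eta\, L^*)_{ii} = \sum_k L_{ik}\, \eta_k\, \overline{L_{ik}} = \sum_k |L_{ik}|^2 \eta_k = \sum_k Q_{ik}\eta_k = (Q\eta)_i. \]
Since $E$ discards off-diagonal entries, this one calculation immediately gives the equivalence $\xi = Q\eta \Leftrightarrow \diag\xi = E(L\diag\eta\, L^*)$, which is the principal assertion.

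For items (i)--(v), the crucial observation is the identification of the row and column sums of $Q$ with diagonal entries of natural operators: the row sum at row $i$ is $\sum_j |L_{ij}|^2 = (LL^*)_{ii} = \|L^*e_i\|^2$, and the column sum at column $j$ is $\sum_i |L_{ij}|^2 = (L^*L)_{jj} = \|Le_j\|^2$. Item (i) follows at once from $L^*L\le I$ and $LL^*\le I$, which hold since $L$ is a contraction. For (ii), the forward direction is immediate from $L^*L=I$. For the converse, if $Q$ is column-stochastic then the positive operator $I-L^*L$ has zero diagonal, and since $\langle (I-L^*L)e_j,e_j\rangle = \|(I-L^*L)^{1/2}e_j\|^2 = 0$ forces $(I-L^*L)^{1/2}e_j=0$ for every $j$, we conclude $L^*L=I$, so $L$ is an isometry. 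Item (iii) is entirely symmetric, applied to $LL^*$.

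For (iv), the forward direction is by definition: if $L$ is unitary then $Q = L\circ\bar L$ is the Schur-product of a unitary with its conjugate, i.e., unistochastic. For the converse, if $Q = V\circ \bar V$ for some unitary $V$, then applying the already-established parts (ii) and (iii) to $V$ shows that $Q$ is doubly stochastic; then (ii) and (iii) applied to $L$ itself force $L$ to be both isometry and co-isometry, hence unitary. Item (v) is identical, with "orthogonal" replacing "unitary" throughout.

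The only step requiring slightly more than bookkeeping is the implication used in (ii) and (iii) that a bounded positive operator with zero diagonal relative to an orthonormal basis must vanish; this is standard but essential, since it is what upgrades the equality of row or column sums to the operator identity $L^*L = I$ or $LL^* = I$ and thereby makes these items biconditional rather than one-sided.
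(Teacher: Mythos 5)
Your proof is correct and takes essentially the same approach as the paper's; the only cosmetic difference is that in the converse of (ii) you unwind the square root of $I-L^*L$ to conclude the operator vanishes, where the paper instead cites faithfulness of $E$ --- the same fact at different levels of abstraction. One small caveat on (v): replacing ``unitary'' by ``orthogonal'' in your (iv) argument does not close the converse by itself, since applying (ii) and (iii) to $L$ only yields that $L$ is unitary, not orthogonal; the paper bridges this by explicitly invoking the tacit assumption in item (v) that $L$ has real entries, under which unitary and orthogonal coincide.
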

\noindent
For completeness we repeat the straightforward short proof.
\begin{proof}
  Given $\xi,\eta\in\ell^\infty$, notice that for any $n\in\mathbb{N}$,
  \begin{align*}
    \angles{E(L\diag\eta\, L^*)e_n,e_n} & = \angles{L\diag\eta\, L^*e_n,e_n}                                                         \\
                                      & = \angles{(\diag\eta) \sum_{j=1}^\infty \bar{L}_{nj}e_j,\sum_{k=1}^\infty \bar{L}_{nk}e_k} \\ 
                                      & = \angles{\sum_{j=1}^\infty \eta_j\bar{L}_{nj}e_j,\sum_{k=1}^\infty \bar{L}_{nk}e_k}     \\
                                      & = \sum_{j=1}^\infty \abs{L_{nj}}^2 \eta_j = (Q\eta)_n. 
  \end{align*}
  Notice now that 
  \begin{equation}
    \label{eq:11}
    \sum_{j=1}^\infty Q_{ij} = \sum_{j=1}^\infty L_{ij}L^*_{ji} = \angles{LL^*e_i,e_i} = \snorm{L^*e_i}^2 \le 1 \qquad \text{for every $i$,} 
  \end{equation}
  and similarly 
  \begin{equation}
    \label{eq:12}
    \sum_{i=1}^\infty Q_{ij} = \norm{Le_j}^2 \le 1 \qquad \text{for every $j$.}
  \end{equation}
  \begin{enumerate}[label=(\roman*), align=left, labelwidth=1.2em, labelindent=0pt, labelsep=4pt, itemindent=4pt, leftmargin=1.2em, itemsep=0pt]
  \item Immediate from (\ref{eq:11}) and (\ref{eq:12}). \label{item:8}
  \item If $L$ is an isometry, then it is immediate from \ref{item:8} and the equality cases of (\ref{eq:12}) that $Q$ is column-stochastic.
Conversely assume that $Q$ is column-stochastic and hence $\snorm{Le_j}=1$ for all $j$ by (\ref{eq:12}).
Then $\angles{L^*Le_j,e_j}=1$ for all $j$ and thus it follows that $E(I-L^*L)=0$.
Since $E$ is faithful and $I-L^*L\ge 0$ because $L$ is a contraction by hypothesis, it follows that $L^*L=I$. \label{item:9}
  \item Apply \ref{item:9} to $L^*$. \label{item:5}
  \item Immediate from \ref{item:9} and \ref{item:5}. \label{item:4}
  \item Immediate from \ref{item:4} and the fact that $L$ has real entries. \qedhere
  \end{enumerate}
\end{proof}

Many of the results in \cite{KW-2010-JoFA} are stated and proved in terms of these stochastic matrices.
We state here some of their theorems more relevant to this study.
 
\begin{theorem}[\protect{\cite[Corollary 5.4]{KW-2010-JoFA}}] \label{thm:kworthostochastic}
  If $\xi,\eta\in c_0^*$, then
\[ \xi=Q\eta \text{ for some orthostochastic matrix }Q \iff \xi\prec\eta. \]
\end{theorem}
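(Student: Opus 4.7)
The plan is to prove each direction separately, translating between orthostochastic matrices and real orthogonal operators via Lemma~\ref{lem:kwstochasticcontraction}.

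For the forward implication, suppose $Q = (|O_{ij}|^2)$ with $O$ real orthogonal. Then $Q$ is doubly stochastic since $\sum_j Q_{ij} = (OO^T)_{ii} = 1$ and $\sum_i Q_{ij} = (O^TO)_{jj} = 1$. Tonelli's theorem yields the total-sum equality $\sum_i \xi_i = \sum_j \eta_j$. For the partial sums, rewrite $\sum_{i=1}^n \xi_i = \sum_j c_j \eta_j$ with $c_j := \sum_{i=1}^n Q_{ij} \in [0,1]$ satisfying $\sum_j c_j = n$. Since $\eta$ is nonincreasing, the linear functional $c \mapsto \sum_j c_j \eta_j$ on this polytope is maximized by $c_j = 1$ for $j \le n$ and $0$ otherwise, giving $\sum_{i=1}^n \xi_i \le \sum_{j=1}^n \eta_j$. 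As $\xi,\eta \in c_0^*$ are already monotone, this is precisely $\xi \prec \eta$.

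For the converse, by Lemma~\ref{lem:kwstochasticcontraction}(v) it suffices to construct a real orthogonal operator $O$ with $E(O \diag\eta\, O^T) = \diag\xi$. I plan to adapt Horn's finite-dimensional T-transform construction inductively. Starting from $\lambda^{(0)} := \eta$, at each step $k$ I would maintain the invariants $\lambda^{(k)} \in c_0^*$, $\lambda^{(k)}_i = \xi_i$ for $i \le k$, and $\xi \prec \lambda^{(k)}$. If $\lambda^{(k-1)}_k \ne \xi_k$, the inductive majorization combined with initial-segment matching forces $\lambda^{(k-1)}_k > \xi_k$, and combined with the total-sum equality this yields an index $j > k$ with $\lambda^{(k-1)}_j \le \xi_k$. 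A single planar rotation $R_k$ in the $(k,j)$-plane, with angle chosen so that the diagonal entry at position $(k,k)$ becomes exactly $\xi_k$, completes the step; a coordinate transposition absorbed into the product restores monotonicity if needed. The target operator is $O := \lim_{N\to\infty} R_N R_{N-1} \cdots R_1$.

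The main obstacle is justifying the strong-operator convergence of this infinite product to a real orthogonal operator and verifying that its diagonal action indeed produces $\xi$. The key observation is that $R_k$ acts nontrivially only on the pair $(k,j_k)$, so for each fixed $i$ the orbit $(R_N\cdots R_1)e_i$ stabilizes once no subsequent $j_\ell$ equals $i$; arranging the construction so that $j_k \to \infty$ (possible because $\eta\in c_0$ always provides arbitrarily small entries available to play the role of $\lambda^{(k-1)}_{j_k}$) gives the strong convergence and the coordinatewise identification $E(O\diag\eta\, O^T) = \diag\xi$. A conceptually cleaner alternative, if the direct limiting argument becomes delicate in the nonsummable case, is to prove the finite-dimensional Horn theorem on increasing truncations of $\xi,\eta$ and pass to a subsequential weak limit of the corresponding finite orthostochastic matrices, using $c_0$-membership for compactness and to transfer both the stochastic properties and the equation $\xi = Q\eta$ to the limit.
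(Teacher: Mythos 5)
Note first that this paper does not itself prove Theorem~\ref{thm:kworthostochastic}: it is imported verbatim from \cite[Corollary 5.4]{KW-2010-JoFA}, so there is no internal proof here to compare against. Given that, your forward implication is complete and correct: double stochasticity of $Q$ follows from the orthogonality relations, Tonelli gives $\sum\xi=\sum\eta$, the partial-sum inequalities follow from your linear-programming bound on $\sum_j c_j\eta_j$ over $c_j\in[0,1]$ with $\sum_j c_j=n$, and since $\xi,\eta\in c_0^*$ no rearrangement is needed.

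The converse has concrete gaps beyond the one you flag. Your ``cleaner alternative'' breaks at the first step: the truncations $(\xi_1,\dots,\xi_n)$ and $(\eta_1,\dots,\eta_n)$ do \emph{not} in general satisfy finite-dimensional majorization, since $\sum_{i\le n}\xi_i\le\sum_{i\le n}\eta_i$ typically holds with strict inequality, so the finite Horn theorem does not apply to them as-is; and even after repairing that, entrywise limits of $n\times n$ orthostochastic matrices padded into $B(H)$ need not remain doubly stochastic (mass escapes to infinity), let alone orthostochastic, and $\xi=Q\eta$ can degrade in the limit. The primary T-transform route is closer to a real proof but the stabilization argument is incomplete: the rotation $R_k$ \emph{increases} the entry at position $j_k$ from $\lambda^{(k-1)}_{j_k}$ to $\lambda^{(k-1)}_k+\lambda^{(k-1)}_{j_k}-\xi_k$, so you are not depleting small entries but inflating them, and one must verify that this wavefront of inflated entries stays in $c_0$ and keeps majorizing $\xi$; the transpositions you absorb to restore monotonicity permute coordinates in $[k+1,\infty)$ in a way that can send a fixed basis vector through infinitely many later rotations, undermining the ``each $e_i$ stabilizes'' claim; and an SOT limit of real orthogonals is a priori only an isometry, so to conclude that the limiting $Q$ is orthostochastic (rather than merely column-stochastic, which would only show $\diag\xi\in E(\mathcal{V}(\diag\eta))$) you need a separate argument for surjectivity. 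These are precisely the delicate points, especially in the nonsummable case, that \cite{KW-2010-JoFA} resolves with considerably more machinery; as written your proposal is a plausible heuristic outline but not yet a proof.
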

As an integration of the summable and nonsummable cases, above Theorem~\ref{thm:kworthostochastic} as stated is an example of the convenience afforded by our definition of majorization in contrast with that of \cite[Corollary 5.4]{KW-2010-JoFA}, where the summable and nonsummable cases are combined here under the new notation.

Using these tools, Kaftal and Weiss go on to prove an infinite dimensional analogue of the Schur--Horn Theorem for partial isometry orbits.
This includes the unitary orbits for strictly positive compact operators (see the next two theorems).
\begin{theorem}[\protect{\cite[Proposition 6.4]{KW-2010-JoFA}}] \label{thm:kwpartialisometryorbit}
  Let $A\in K(H)^+$.
Then
\[ E(\mathcal{V}(A)) =
  \{ B\in \mathcal{D}\cap K(H)^+ \mid s(B)\prec s(A) \}.
\]
\end{theorem}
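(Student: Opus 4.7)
The proof naturally splits into the two inclusions, with Lemma~\ref{lem:kwstochasticcontraction} and Theorem~\ref{thm:kworthostochastic} furnishing the stochastic-matrix bridge between the operator-theoretic and sequence-theoretic sides.

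For the inclusion $E(\mathcal{V}(A))\subseteq\{B\in\mathcal{D}\cap K(H)^+\mid s(B)\prec s(A)\}$, I would pick $V$ with $V^*V=R_A$, spectrally decompose $A=\sum_j s(A)_j\,f_j\otimes f_j$ on $\overline{\range A}$, and set $v_j:=Vf_j$. Since $V$ is isometric on $\overline{\range A}$, the vectors $v_j$ form an orthonormal sequence in $H$, and a direct calculation in the spirit of Lemma~\ref{lem:kwstochasticcontraction} gives $\langle VAV^* e_n,e_n\rangle=(Qs(A))_n$ with $Q_{nj}:=|\langle v_j,e_n\rangle|^2$. The column sums of $Q$ equal $\|v_j\|^2=1$, so $Q$ is column-stochastic. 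Letting $\xi_n:=\langle VAV^* e_n,e_n\rangle$, the partial-sum inequalities $\sum_{n=1}^m\xi^*_n\le\sum_{j=1}^m s(A)_j$ follow from the standard rearrangement argument for substochastic matrices (isolating the $m$ largest entries of $\xi$ produces column weights $c_j\in[0,1]$ with $\sum_j c_j\le m$, and the maximum of $\sum_j c_j\,s(A)_j$ against a decreasing sequence is attained at the greedy choice $c_1=\cdots=c_m=1$), while equality of total sums follows from column-stochasticity via Fubini--Tonelli.

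For the reverse inclusion, given $B\in\mathcal{D}\cap K(H)^+$ with diagonal $\xi$ satisfying $s(B)=\xi^*\prec s(A)$, I would apply Theorem~\ref{thm:kworthostochastic} to $s(B),s(A)\in c_0^*$ to obtain an orthogonal matrix $O$ on $\ell^2(\mathbb{N})$ whose Schur-square $Q$ satisfies $Qs(A)=s(B)$. Fix a bijection $\sigma$ between $\supp\xi$ and $\supp s(B)$ matching values, $\xi_n=s(B)_{\sigma(n)}$, and define
\[ Vf_j:=\sum_{n\in\supp\xi}O_{\sigma(n),j}\,e_n,\qquad V|_{\ker A}:=0. \]
Then $\langle VAV^*e_n,e_n\rangle=\sum_j s(A)_j\,O_{\sigma(n),j}^2=(Qs(A))_{\sigma(n)}=s(B)_{\sigma(n)}=\xi_n$, so $E(VAV^*)=B$. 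That $V$ is an honest partial isometry with initial space $R_AH$ reduces to verifying $\sum_{n\in\supp\xi}O_{\sigma(n),j}O_{\sigma(n),k}=\delta_{jk}$, which follows from orthonormality of the columns of $O$ together with the observation that any row of $O$ indexed outside $\supp s(B)$ must vanish on $\supp s(A)$: from $(Qs(A))_m=s(B)_m=0$ one forces $Q_{m,j}=O_{m,j}^2=0$ for every $j\in\supp s(A)$.

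The main technical obstacle is exactly this bookkeeping in the backward construction: Theorem~\ref{thm:kworthostochastic} delivers $O$ keyed to the monotonized sequences $s(A),s(B)$, whereas the target diagonal $\xi$ may be in arbitrary order and interspersed with zeros. Handling both the finite- and infinite-support cases for $\xi$ uniformly---so that the chosen row-rearrangement and zero-padding produce genuinely orthonormal columns and hence a bona fide partial isometry with the prescribed initial space---is the delicate step, and rests entirely on the vanishing-row observation above.
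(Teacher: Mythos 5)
The paper cites this theorem from Kaftal--Weiss \cite[Proposition 6.4]{KW-2010-JoFA} without reproducing a proof, so there is no internal argument to compare against; I can only assess your proposal on its own terms. It is correct, and it is the natural route given the tools the paper does make available: the Bessel/column-stochastic computation packaged in Lemma~\ref{lem:kwstochasticcontraction} furnishes the forward inclusion, and Theorem~\ref{thm:kworthostochastic} furnishes the backward one, which is also essentially how Kaftal--Weiss organize the original argument. In the forward direction, the Tonelli interchange indeed yields $\sum\xi_n=\sum_j s(A)_j$ in both the summable and nonsummable cases, so the equality clause of Definition~\ref{def:majorization} comes for free from column-stochasticity. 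In the backward direction, the vanishing-row observation is exactly the right device: since $(Qs(A))_m=0$ forces $O_{mj}=0$ for every $j\in\supp s(A)$, the restriction of the columns of $O$ to the rows in $\supp s(B)$ remains orthonormal (on the index set $\supp s(A)$), which is precisely what makes the map $V$ an isometry on $\overline{\range A}$; note that this simultaneously rules out the pathological possibility $\rank A>\rank B$ when both are finite, consistent with what $s(B)\prec s(A)$ already imposes. The bookkeeping you flag is real but, as you argue, entirely discharged by this observation.
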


Again, comparing this statement of the theorem with  \cite[Proposition 6.4]{KW-2010-JoFA}, one sees the convenience of defining majorization as in Definition~\ref{def:majorization}  as opposed to  \cite[Definition 1.2]{KW-2010-JoFA}. 

Focusing on the partial isometry orbit as opposed to the unitary orbit in the above theorem  sidesteps the effects of the dimension of the kernel of the operator $A$.
In this way, this theorem avoids the difficulties that lie therein.
A similar situation appeared in \cite{AK-2006-OTOAaA} when they studied $E(\overline{\mathcal{U}(A)}^{\norm{\cdot}_1})$ for positive trace-class operators $A$, which does not involve kernel dimension considerations.
 In addition, they showed that $\mathcal{V}(A)=\overline{\mathcal{U}(A)}^{\snorm{\cdot}_1}$. 
More generally, for $A$ positive compact, it is elementary to show $\mathcal{V}(A)=\overline{\mathcal{U}(A)}^{\Vert\cdot\Vert}$.
Since none of these three objects encodes the dimension of the kernel of $A$ and because they coincide for trace-class operators, $\mathcal{V}(A)$ is a natural substitute for $\overline{\mathcal{U}(A)}^{\snorm{\cdot}_1}$ for positive compact operators $A$ (outside the trace class) in the context of Schur--Horn theorems. 

However, the question of precisely what is $E(\mathcal{U}(A))$ for all $A\in K(H)^+$ was only partially answered in \cite{KW-2010-JoFA}.
In particular, it was answered when $A$ has finite rank or when $R_A=I$ (that is, when $A$ is strictly positive).
When $A$ has finite rank, $\mathcal{U}(A)=\mathcal{V}(A)$ (see Theorem \ref{thm:sufficiencyofpmajorization}, proof case 1), and so is covered by Theorem~\ref{thm:kwpartialisometryorbit}.
For the case when $R_A=I$,  they have
\begin{theorem}[\protect{\cite[Proposition 6.6]{KW-2010-JoFA}}] \label{thm:kwrangeprojectionidentity}
  Let $A\in K(H)^+$ with $R_A=I$.
Then 
\[ E(\mathcal{U}(A)) = E(\mathcal{V}(A))\cap\{ B\in\mathcal{D} \mid R_B=I \}. \]
\end{theorem}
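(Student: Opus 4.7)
My plan is to prove the two inclusions separately.

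For the forward inclusion $E(\mathcal{U}(A)) \subseteq E(\mathcal{V}(A)) \cap \{B \in \mathcal{D} \mid R_B = I\}$, the key observation is that when $R_A = I$ one has $R_A \vee R_{A^*} = I$, so every unitary $U$ automatically satisfies the partial-isometry condition $U^*U = R_A \vee R_{A^*}$ of Definition~\ref{def:partial-isometry-orbit}. Hence $\mathcal{U}(A) \subseteq \mathcal{V}(A)$ and so $E(\mathcal{U}(A)) \subseteq E(\mathcal{V}(A))$. To see that any $B = E(UAU^*)$ additionally has $R_B = I$, I would compute the diagonal entries directly:
\[
\langle Be_n, e_n \rangle = \langle UAU^*e_n, e_n \rangle = \|A^{1/2}U^*e_n\|^2.
\]
Since $\ker A = \{0\}$ (from $R_A = I$) and $U^*e_n \ne 0$, every diagonal entry of $B$ is strictly positive; as $B$ is a diagonal positive operator with no zero entries, $R_B = I$.

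For the reverse inclusion I would start with $B \in E(\mathcal{V}(A)) \cap \{B \in \mathcal{D} \mid R_B = I\}$. Theorem~\ref{thm:kwpartialisometryorbit} supplies $s(B) \prec s(A)$, and Theorem~\ref{thm:kworthostochastic} produces an orthostochastic matrix $Q_0$, say the Schur-square of a real orthogonal $O_0$, with $s(B) = Q_0\, s(A)$. Writing $B = \diag b$, the hypothesis $R_B = I$ means $b_n > 0$ for every $n$, so there is a bijection $\sigma : \mathbb{N} \to \mathbb{N}$ and associated permutation matrix $\Pi$ with $b = \Pi\, s(B)$. A short index computation then gives $(\Pi Q_0)_{ij} = (\Pi O_0)_{ij}^2$; since $\Pi O_0$ is orthogonal, $Q := \Pi Q_0$ is orthostochastic and $b = Q\, s(A)$. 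Applying Lemma~\ref{lem:kwstochasticcontraction}(v) with the orthogonal matrix $O := \Pi O_0$ yields $B = \diag b = E(O \diag s(A)\, O^*)$. Finally, the spectral theorem (using $\ker A = \{0\}$ to obtain an orthonormal eigenbasis of $H$) produces a unitary $U_A$ with $A = U_A \diag s(A)\, U_A^*$; setting $U := OU_A^*$ gives a unitary with $UAU^* = O \diag s(A)\, O^*$, so $E(UAU^*) = B$.

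The main subtlety is the essential use of the hypothesis $R_B = I$. Without it, one cannot in general realize $b = Q\, s(A)$ with $Q$ orthostochastic: since every entry of $s(A)$ is strictly positive, any zero entry $b_n = 0$ would force the corresponding row of $Q$ to vanish, which is incompatible with $Q$ being the Schur-square of an orthogonal matrix. Thus the permutation trick that promotes the monotonized identity $s(B) = Q_0\, s(A)$ of Theorem~\ref{thm:kworthostochastic} to the positional identity $b = Q\, s(A)$ works exactly when $b$ has no zeros, which is precisely the content of $R_B = I$. Everything else in the argument is routine bookkeeping between stochastic matrices and operators via Lemma~\ref{lem:kwstochasticcontraction}.
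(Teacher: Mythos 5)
Your proof is correct. This theorem is imported in the paper directly from Kaftal--Weiss \cite[Proposition 6.6]{KW-2010-JoFA} without an in-paper argument, so there is nothing internal to compare against; your blind reconstruction uses precisely the machinery the paper also imports from that source (Theorem~\ref{thm:kwpartialisometryorbit} for the necessity of $s(B)\prec s(A)$, Theorem~\ref{thm:kworthostochastic} to realize $s(B)=Q_0\,s(A)$ with $Q_0$ orthostochastic, Lemma~\ref{lem:kwstochasticcontraction} to translate back to expectations of conjugates), combined with the correct observation that strict positivity of both $b$ and $s(A)$ --- exactly the content of $R_B=I$ and $R_A=I$ --- is what lets the permutation matrix $\Pi$ promote the monotone identity to the positional one $b=Qs(A)$ with $Q=\Pi Q_0$ still orthostochastic. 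One minor citation nit: the step from $b=Qs(A)$ with $Q$ the Schur-square of the orthogonal $O$ to $\diag b=E(O\,\diag s(A)\,O^*)$ is the displayed equivalence at the head of Lemma~\ref{lem:kwstochasticcontraction}, not item~(v), which only records that orthogonality of $L$ corresponds to orthostochasticity of $Q$.
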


\paragraph{Our main contribution.}  
Theorem~\ref{thm:kwrangeprojectionidentity} left open the case when $A$ has infinite rank and nonzero kernel.
We attempt here to close this gap.
In particular, we characterize $E(\mathcal{U}(A))$ when $A$ has infinite dimensional kernel.
When $A$ has finite dimensional kernel, we give a necessary condition for  membership in $E(\mathcal{U}(A))$ (which we conjecture is also sufficient), and we give a sufficient condition for  membership in $E(\mathcal{U}(A))$ (which we know not to be necessary when $0<\trace R_A^\perp <\infty$, see Example~\ref{ex:nonnecessity-of-p-majorization} below which also appears in \cite[Proposition 6.10, Example 6.11]{KW-2010-JoFA}).
These main results are embodied in Theorems~\ref{thm:sufficiencyofpmajorization}, \ref{thm:necessityofapppmajorization} and Corollary~\ref{cor:conjecturetrueforinfinitemo}.
Both of these membership conditions involve new kinds of majorization, which here we call $p$-majorization and herein we introduce approximate $p$-majorization (for $0\le p\le \infty$, Definitions~\ref{def:pmajorization} and~\ref{def:approximatepmajorization} below).
There is a natural hierarchy of these new types of majorization which the diagram in Figure~\ref{fig:maj-hierarchy} describes succinctly.
All of these implications are natural (see the discussions following Definitions~\ref{def:pmajorization} and \ref{def:approximatepmajorization}) except the two corresponding to the dashed arrows, which are handled in Proposition~\ref{prop:strong-maj-vs-p-maj} and are only applicable when both sequences in question are in $c_0^+\setminus\ell^1$. A linear interpretation of the diagram in Figure~\ref{fig:maj-hierarchy} is presented in Figure~\ref{fig:lin-maj-hierarchy}. 

\begin{figure}[h!]
  \centering
  \includegraphics{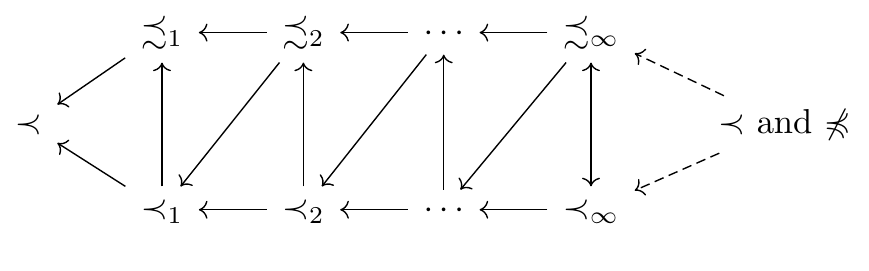}
  \caption{Hierarchy of majorization}
  \label{fig:maj-hierarchy}
\end{figure}

\begin{figure}[h!]
  \centering
  \includegraphics{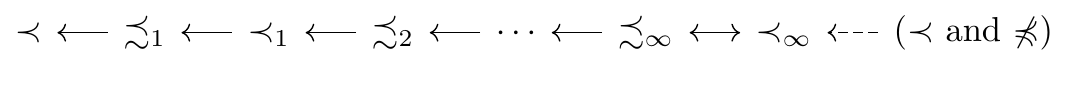}
  \caption{Linear hierarchy of majorization}
  \label{fig:lin-maj-hierarchy}
\end{figure}

\section{$p$-majorization (sufficiency)} \label{sec:p-majorization}

A result which was known to Kaftal and Weiss, and almost certainly to others, is a necessary condition for membership in the expectation of the unitary orbit of a positive operator (see \cite[Proof of Lemma 6.9]{KW-2010-JoFA}).
Namely, the dimensions of the kernels of operators in the range of the expectation of the unitary orbit of a positive operator cannot increase from the dimension of the kernel of the operator itself.

\begin{proposition}\label{prop:cantaddzeros}
If $A\in B(H)^+$ and $B\in E(\mathcal{U}(A))$, then $\ker B \subseteq \ker(UAU^*)$ for some $U\in \mathcal{U}(H)$, and hence also $\trace R_B^\perp \le\trace R_A^\perp $. 
\end{proposition}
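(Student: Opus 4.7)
The plan is to exploit the fact that $B$, being diagonal, has its kernel spanned by the basis vectors $e_n$ corresponding to zero diagonal entries, and then use positivity of $UAU^*$ to conclude that each such $e_n$ is actually annihilated by $UAU^*$ itself, not merely by its compression to the diagonal.

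First I would unpack the hypothesis: $B = E(UAU^*)$ for some $U \in \mathcal{U}(H)$, so $B_{nn} = \langle UAU^* e_n, e_n\rangle$ for every $n$. Because $B$ is a positive diagonal operator in the masa $\mathcal{D}$, its kernel is exactly $\ker B = \overline{\spans}\{e_n : B_{nn}=0\}$. For any index $n$ with $B_{nn}=0$, positivity of $UAU^*$ lets me write
\[ 0 = \langle UAU^* e_n, e_n\rangle = \bigl\|(UAU^*)^{1/2}e_n\bigr\|^2, \]
so $(UAU^*)^{1/2}e_n = 0$ and hence $e_n \in \ker\bigl((UAU^*)^{1/2}\bigr) = \ker(UAU^*)$. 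Taking closed linear span over all such $n$ yields $\ker B \subseteq \ker(UAU^*)$, which is the first conclusion.

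For the inequality on kernel dimensions, I would simply observe that unitary conjugation is a Hilbert-space isomorphism, so $\dim\ker(UAU^*) = \dim\ker A$. Combining with the inclusion above,
\[ \trace R_B^\perp = \dim\ker B \le \dim\ker(UAU^*) = \dim\ker A = \trace R_A^\perp. \]
There is no real obstacle here; the only subtlety worth flagging is the step from vanishing of the diagonal entry to actual annihilation by $UAU^*$, which relies essentially on positivity (for a general self-adjoint operator, zero diagonal entries need not correspond to kernel vectors). The proposition would fail entirely without the positivity hypothesis on $A$.
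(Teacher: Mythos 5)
Your argument is correct and matches the paper's ``Vector proof'' essentially verbatim: both use positivity to pass from $\langle UAU^*e_n,e_n\rangle = 0$ to $(UAU^*)^{1/2}e_n = 0$, take the closed span, and then use $\dim\ker(UAU^*)=\dim\ker A$. (The paper also offers a second ``Projection proof'' via maximality of $R_A^\perp$, but your route is the same as its first.)
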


\begin{proof}[Vector proof]
  Since $B\in E(\mathcal{U}(A))$, $B=E(UAU^*)$ for some unitary $U\in\mathcal{U}(H)$, and since $\trace R_A^\perp =\dim\ker A =\dim\ker UAU^* = \trace R_{UAU^*}^\perp $, the required trace inequality follows from the inclusion $\ker B\subseteq \ker UAU^*$, which itself follows from $A\ge 0$ and 
  \begin{align*}
    \ker B &= \ker E(UAU^*) \\
    &= \overline{\spans}\{e_n\mid \angles{E(UAU^*)e_n,e_n}=0\} \\
    &= \overline{\spans}\{e_n\mid \angles{UAU^*e_n,e_n}=0\} \\
    &= \overline{\spans}\{e_n\mid \snorm{(UAU^*)^{\nicefrac{1}{2}}e_n}^2=0\} \\
    &\subseteq \ker(UAU^*)^{\nicefrac{1}{2}} = \ker UAU^* \qedhere
  \end{align*}
\end{proof}

\begin{proof}[Projection proof]
  Since $A\ge 0$, $R_A^\perp$ is the largest projection $P$ so that $PAP = 0$.
Since $B\in E(\mathcal{U}(A))$, there is some unitary $U\in\mathcal{U}(H)$ so that $B=E(UAU^*)$.
Notice also that $R_B^\perp \in \mathcal{D}$ since $B\in\mathcal{D}$.
Therefore, 
  \begin{align*}
    E(U(U^*R_B^\perp U)A(U^*R_B^\perp U)U^*) 
    &= E(R_B^\perp UAU^*R_B^\perp) \\
    &= R_B^\perp E(UAU^*)R_B^\perp = R_B^\perp BR_B^\perp=0.
  \end{align*}
Then since $E$ is faithful and $U(U^*R_B^\perp U)A(U^*R_B^\perp U)U^*$ is positive, it is zero, and thus conjugating by $U^*$ shows $(U^*R_B^\perp U)A(U^*R_B^\perp U)=0$.
By the maximality of $R_A^\perp$, $U^*R_B^\perp U\le R_A^\perp$ and therefore
\[ \trace R_B^\perp  = \trace(U^*R_B^\perp U) \le \trace R_A^\perp . \qedhere \]
\end{proof}

Before we proceed with our analysis, we need to introduce next a concept similar to \cite[Definition 6.8(ii)]{KW-2010-JoFA} which here we call $p$-majorization.
 Roughly speaking, it is majorization along with eventual $p$-expanded majorization.
And this led us to the definition below of $\infty$-majorization which is both new and fruitful. 

\begin{definition}\label{def:pmajorization}
Given $\xi,\eta\in c_0^+$ and $0\le p<\infty$, we say that $\xi$ is  \emph{$p$-majorized} by $\eta$, denoted $\xi\prec_p\eta$, if $\xi\prec\eta$ and there exists an $N_p\in\mathbb{N}$ such that for all $n\ge N_p$, we have the inequality
\[ \sum_{k=1}^{n+p} \xi_k \le \sum_{k=1}^{n} \eta_k. \]
And $\infty$-majorization, denoted $\xi\prec_\infty\eta$, means $\xi\prec_p\eta$ for all $p\in\mathbb{N}$. 
\end{definition}

\noindent Note that $\xi\prec_0\eta$ is precisely the statement that $\xi\prec\eta$ (recall Definition~\ref{def:majorization}, which includes equality of the sums).
One  also observes that if $\xi\prec_p\eta$ and $p'\le p$, then $\xi\prec_{p'}\eta$ (we use often the special case that $p$-majorization implies $0$-majorization, i.e., majorization).
For this reason, $\xi\prec_p\eta$ for infinitely many $p$ is equivalent to $\xi\prec_p\eta$ for all $p<\infty$, in which case we say that $\xi$ is $\infty$-majorized by $\eta$ and we write $\xi\prec_\infty\eta$. 

One should also take note that $p$-majorization is actually strictly stronger than $p'$-majorization when $p'<p$. 
That is, there exist sequences $\xi,\eta\in c_0^+$ for which $\xi\prec_{p'}\eta$ but $\xi\not\prec_p\eta$.
 From the remarks of the previous paragraph, it suffices to exhibit $\xi,\eta$ when $p=p'+1$. 
To produce such sequences, start with any $0<\eta\in c_0^*$ and define 
\[ \xi^{(p)} \coloneqq{} \langle \underbrace{\nicefrac{\eta_1}{p},\ldots,\nicefrac{\eta_1}{p}}_{\text{$p$ times}},\eta_2,\eta_3,\ldots \rangle. \]
Even though $\xi^{(p)}$ is not necessarily monotone, it is not difficult to verify that $\xi^{(p)}\prec_{p'}\eta$ but $\xi^{(p)}\not\prec_p\eta$. 

\begin{remark}
  \label{rem:strategy}
  Because $\mathcal{V}(A)\supseteq \mathcal{U}(A)$, Theorem~\ref{thm:kwpartialisometryorbit} for $A\in K(H)^+$ implies that $s(B)\prec s(A)$ is a necessary condition for $B\in E(\mathcal{U}(A))$;  Proposition~\ref{prop:cantaddzeros} shows $\trace R_B^\perp\le \trace R_A^\perp$ is another necessary condition for membership in $E(\mathcal{U}(A))$ and with majorization is equivalent to membership in $E(\mathcal{U}(A))$ when $\trace R_A^\perp=0$ by Theorem~\ref{thm:kwrangeprojectionidentity}.
It was natural in \cite{KW-2010-JoFA} to ask how the role of majorization is impacted by the dimension of these kernels.
In particular here we enhance this program by asking, what role, if any, $p=\trace R_A^\perp-\trace R_B^\perp$ plays in relating membership in $E(\mathcal{U}(A))$ to majorization.
And when this difference is undefined, $p=0$ is the minimal $p$ for which $\trace R_A^\perp\le \trace R_B^\perp +p$.
This is the strategy that guided our program.
\end{remark}

The result below appears in \cite{KW-2010-JoFA} as Lemma 6.9 for $p<\infty$, the proof of which utilizes orthostochastic matrices.
We provide a different proof which instead utilizes expectations of unitary orbits because it leads to a straightforward extension to the $p=\infty$ case.
See Remark~\ref{rem:pmajorizationimpliesorthostochastic} for when an orthostochastic matrix can be produced to implement the construction.
But we do not have a complete characterization for this orthostochasticity case.

\begin{theorem}\label{thm:sufficiencyofpmajorization}
Let $A,B\in K(H)^+$, $B\in\mathcal{D}$ and  $\trace R_B^\perp \le\trace R_A^\perp $. \\ If for some $0\le p\le\infty$, $s(B)\prec_p s(A)$ and $\trace R_A^\perp \le\trace R_B^\perp +p$, \\ then $B\in E(\mathcal{U}(A))$. 
\end{theorem}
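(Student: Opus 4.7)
The plan is to construct a unitary $U\in\mathcal{U}(H)$ realizing $B=E(UAU^*)$ by reducing, wherever possible, to Theorem~\ref{thm:kwrangeprojectionidentity}, which handles the strictly positive case $R_A=I$.

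First, if $A$ has finite rank, then $\ker A$ is infinite dimensional, and any partial isometry $V$ with $V^*V=R_A$ satisfies $\dim\ker V=\dim(VH)^\perp=\infty$, so $V$ extends to a unitary $U$. A short computation, using that $U$ and $V$ differ only on $\ker A$ where $A$ vanishes, shows $E(UAU^*)=E(VAV^*)$. Combined with Theorem~\ref{thm:kwpartialisometryorbit} applied to $s(B)\prec s(A)$ (which is implied by $\prec_p$), this produces $B\in E(\mathcal{U}(A))$ in this case.

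For $A$ of infinite rank, set $q:=\trace R_A^\perp-\trace R_B^\perp$, so $0\le q\le p$ by the hypotheses. I would fix an isometric embedding $W_1:\ker B\hookrightarrow\ker A$, let $K_0:=\ker A\ominus W_1(\ker B)$ of dimension $q$, and demand $U^*|_{\ker B}=W_1$. The remaining task reduces to constructing a unitary $W_2:\ker^\perp B\to K_0\oplus\ker^\perp A$ whose diagonal in the basis $\{e_i\}_{i\in J^c}$ of $\ker^\perp B$, upon conjugation of $\tilde A:=A|_{K_0\oplus\ker^\perp A}$, equals $\hat B:=B|_{\ker^\perp B}$. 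Here $\tilde A$ has kernel $K_0$ of dimension $q$ and is strictly positive on $\ker^\perp A$. In the subcase $q=0$, Theorem~\ref{thm:kwrangeprojectionidentity} applied to $A|_{\ker^\perp A}$ (strictly positive) and $\hat B$ (strictly positive, so $R_{\hat B}=I$) supplies $W_2$ directly, using $s(\hat B)=s(B)\prec s(A)$.

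The main obstacle is the subcase $q\ge 1$: the operator $\tilde A$ retains a kernel of dimension $q$, so Theorem~\ref{thm:kwrangeprojectionidentity} is not immediately applicable. Here the $p$-majorization hypothesis is essential. My plan is to designate a $q$-dimensional subspace of $\ker^\perp B$, spanned by basis vectors $e_{n_1},\ldots,e_{n_q}$ (chosen using the $p$-majorization to correspond to the largest diagonal entries to be absorbed), on which $W_2^*$ maps each $e_{n_k}$ to a vector mixing $K_0$ with $\ker^\perp A$, calibrated so that $\angles{\tilde A W_2^*e_{n_k},W_2^*e_{n_k}}=\hat B_{n_k,n_k}$ and so that $W_2^*$ uses up all of $K_0$; on the orthogonal complement $\spans\{e_i:i\in J^c\setminus\{n_k\}\}$, I would let $W_2$ map purely into $\ker^\perp A$ and invoke Theorem~\ref{thm:kwrangeprojectionidentity} for the residual Schur--Horn problem. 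The inequality $\sum_{k=1}^{n+p}s(B)_k\le\sum_{k=1}^n s(A)_k$ for $n\ge N_p$ supplies exactly the slack needed to ensure the residual diagonal (obtained from $\hat B$ after removing the $q$ designated entries) still satisfies the ordinary majorization required by Theorem~\ref{thm:kwrangeprojectionidentity}. For $p=\infty$, which handles in particular the case when $q$ is infinite, the same scheme applies with an infinite ``mixing'' subspace, which I would handle either by a limiting/compactness argument on the finite-$p'$ constructions (available since $\prec_\infty$ gives $\prec_{p'}$ for every $p'$) or by directly exploiting the strength of $\infty$-majorization to obviate any compatibility constraint between $p'$ and $q$.
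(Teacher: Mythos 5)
Your Case~1 (finite rank) and your $q=0$ subcase are correct and essentially match the paper's Cases~1 and~2. The gap is in your $q\ge 1$ subcase, and for finite $q$ it is already dimensional: your construction forces the designated vectors $W_2 e_{n_1},\ldots,W_2 e_{n_q}$ to lie entirely in $K_0$, giving zero diagonal entries. Indeed, $\{W_2 e_i\}_{i\in J^c}$ must be an orthonormal basis of $K_0\oplus\ker^\perp A$. Writing $M \coloneqq{} \overline{\spans}\{W_2 e_i : i\ne n_1,\ldots,n_q\}\subseteq\ker^\perp A$, the remaining vectors $\{W_2 e_{n_k}\}_{k=1}^q$ must span $(K_0\oplus\ker^\perp A)\ominus M = K_0\oplus(\ker^\perp A\ominus M)$, so $q = \dim K_0 + \dim(\ker^\perp A\ominus M) = q + \dim(\ker^\perp A\ominus M)$, forcing $M = \ker^\perp A$ and hence $\spans\{W_2 e_{n_k}\}_k = K_0$. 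Consequently $\angles{\tilde A W_2 e_{n_k}, W_2 e_{n_k}} = 0 \ne \hat{B}_{n_k,n_k}$, a contradiction: if only $q$ basis vectors touch $K_0$, the rest exhaust $\ker^\perp A$ and no genuine mixing is possible. Even setting this aside, a second obstacle remains: the ``residual Schur--Horn problem'' on $M$ is governed by the compression $P_M A P_M$ rather than the restriction $A|_M$, and these agree only when $\ker^\perp A\ominus M$ is $A$-reducing; and if one does choose it to be spanned by eigenvectors with eigenvalues $\lambda_1,\ldots,\lambda_q$, then in the trace-class case the trace-equality clause in Definition~\ref{def:majorization} forces $\sum_k\lambda_k = \sum_k\hat{B}_{n_k,n_k}$, eliminating exactly the slack you hoped $p$-majorization would supply.

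The paper's proof of Case~3 sidesteps both issues via a different route. After the same reduction to $\ker B=0$ and simultaneous diagonalization, it first builds an auxiliary decreasing sequence $\xi'$ from $\xi=s(B)$ by \emph{summing} certain carefully chosen pairs of entries (setting $\xi'_{N_m-(m-1)}=\xi_{N_m}+\xi_{N'_m}$) and shifting the remaining ones; the $p$-majorization hypothesis is precisely what guarantees $\xi'\prec s(A)$. Theorem~\ref{thm:kwrangeprojectionidentity} then realizes $\diag_{H_1\oplus H_2}\angles{\mathbf{0},\xi'}\in E(\mathcal{U}(A))$ via a unitary $U$ preserving the decomposition $H=\ker A\oplus\ker^\perp A$. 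Finally, $2\times 2$ rotations $V_m$ act on the pairs $(f_m, g_{N_m-(m-1)})$, where the compression of $UAU^*$ is the diagonal matrix $\diag(0,\xi'_{N_m-(m-1)})$ because $f_m\in\ker(UAU^*)$; each $V_m$ splits $\xi_{N_m}+\xi_{N'_m}$ back into $\xi_{N_m}$ and $\xi_{N'_m}$, with the kernel position $f_m$ receiving one of them, and a final basis permutation finishes. These rotations introduce no off-diagonal contamination precisely because the $2\times 2$ compressions they act on are already diagonal. The crucial idea your proposal is missing is this ``sum two entries, realize the compressed sequence, then split by rotation'' scheme; assigning one kernel direction per designated $B$-position is blocked already at the level of counting.
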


\begin{proof}

Without loss of generality we may assume that $A\in\mathcal{D}$ (i.e., $A,B$ are simultaneously diagonalized) because $\mathcal{U}(A)=\mathcal{U}(UAU^*)$ for every $U\in\mathcal{U}(H)$. 
We may further reduce to the case when $\trace R_B^{\perp}=0$ via a splitting argument. 
Because $\dim\ker B=\trace R_B^\perp<\trace R_A^\perp=\dim\ker A$, without loss of generality we can assume $\ker B\subseteq\ker A$. 
Then with respect to $H=S\oplus S^\perp$, one has $A=A_1\oplus A_2$, $B=B_1\oplus B_2$, $A_1=0=B_1$, and $B_2$ has zero kernel. 
Thus once we find a unitary $U$ on $S^{\perp}$ for which $E(UA_2U^{*})=B_2$, then $I\oplus U$ satisfies $E((I\oplus U)A(I\oplus U)^{*})=B$.

\emph{Case 1: $A$ has finite rank.}  

\noindent In this case $\mathcal{U}(A)=\mathcal{V}(A)$, even if $A$ is not necessarily self-adjoint.
Indeed, the elements of $\mathcal{V}(A)$, by Definition \ref{def:partial-isometry-orbit}, have the form $VAV^*$ for some partial isometry for which $V^*V=R_A \vee R_{A^*}(\coloneqq{}P)$, so $PA=AP=A$.
Then $VAV^*=(VP)A(VP)^*$ and $VP$ is also a partial isometry with $(VP)^*(VP)=P$.
But this partial isometry $VP$ is finite rank since $R_A$ and $R_{A^*}$ and hence also $P$ are finite rank, and so $VP$ can be extended to a unitary $U$ for which $VAV^*=(VP)A(VP)^*=UAU^*$.
This shows $\mathcal{V}(A)\subseteq\mathcal{U}(A)$, and hence equality.
That the conclusion of Theorem \ref{thm:sufficiencyofpmajorization} holds in this case is then covered by Theorem \ref{thm:kwpartialisometryorbit}, since $A\ge 0$, $\mathcal{U}(A)=\mathcal{V}(A)$, and since $s(B)\prec_p s(A)$ implies $s(B)\prec s(A)$. 

In the proof of Case 1, we only used the facts that $A$ had finite rank and $s(B)\prec s(A)$.
Although not needed in this case, the other hypotheses hold automatically and for edification we explain why.
Even though $s(B)\prec_p s(A)$ for some $p\ge 0$ implies $s(B)\prec s(A)$, one has the stronger converse: when $s(A)$ has finite support, $s(B)\prec s(A)$ implies $s(B)\prec_p s(A)$ for every $p\ge 0$, i.e., $s(B)\prec_\infty s(A)$.
Indeed, let $N_p$ be the largest index for which $s(A)$ has a nonzero value.
Then for all $k\ge N_p$, we have 
\[ \sum_{j=1}^{k+p} s_j(B) \le \sum_{j=1}^{k+p} s_j(A) = \sum_{j=1}^k s_j(A) \]
and therefore $s(B)\prec_p s(A)$.
Since $p$ is arbitrary, $s(B)\prec_\infty s(A)$.
This shows that the second inequality in the hypotheses is satisfied for $p=\infty$ since its right-hand side is infinite.
The first inequality is satisfied since $\trace R_A^\perp =\infty$ because $A$ is finite rank. 

\emph{Case 2: $A$ has infinite rank and $\trace R_B^\perp =\trace R_A^\perp $ (the $p=0$ case).} 

\noindent With the earlier reduction that $\trace R_B^{\perp}=0$ and using $p=0$, Case 2 is a direct consequence of Theorem~\ref{thm:kwrangeprojectionidentity}.

\emph{Case 3: $A$ has infinite rank and $\trace R_B^\perp <\trace R_A^\perp $ (the most complicated case).}

\noindent Since $\trace R_A^\perp\le \trace R_B^\perp +p$, if necessary, by passing to a smaller $p$ we may assume $\trace R_A^\perp =\trace R_B^\perp +p$, even if $\trace R_A^\perp = \infty$ (in which case $p=\infty$). 
Since $\trace R_B^{\perp}=0$, one has $\trace R_A^\perp = p$, and since $\trace R_B^\perp<\trace R_A^\perp$, one has $1\le p\le \infty$.

We now employ another splitting. 
First let $\{e_j\}_{j=1}^\infty$ denote the basis that diagonalizes $A,B$, and then assign them different names, that is, let $\{f_j\}_{j=1}^p$ be the collection of $e_j$'s such that $(Ae_j,e_j)=0$.
Since $A$ is diagonalized with respect to the basis $\{e_j\}_{j=1}^\infty$ the collection $\{f_j\}_{j=1}^p$ forms an orthonormal basis for $\ker A$.
Let $\{g_j\}_{j=1}^\infty$ consist of the remainder of the set $\{e_j\}_{j=1}^\infty$, which means $\{g_j\}_{j=1}^\infty$ is a basis for $\ker^\perp A$.
Then $H=H_1\oplus H_2$, where $H_1=\ker A$ and $H_2=\ker^\perp A$.
Let $\mathbb{N}_p\coloneqq{}\{n\in\mathbb{N} \mid n\le p\}$ and then define $\diag_{H_1\oplus H_2}:\ell^\infty(\mathbb{N}_p)\times\ell^\infty(\mathbb{N})\to \mathcal{D}$ by
\begin{equation}
  \label{eq:21}
  \diag_{H_1\oplus H_2}\angles{\phi,\rho} = \left(
    \begin{array}{ccc|ccc}
      \phi_1 & \cdots & 0      & 0      & \cdots & 0      \\ 
      \vdots & \ddots & \vdots & \vdots & \ddots & \vdots \\
      0      & \cdots & \phi_p & 0      & \cdots & 0      \\ \hline
      0      & \cdots & 0      & \rho_1 & \cdots & 0      \\
      \vdots & \ddots & \vdots & \vdots & \rho_2 & \vdots \\
      0      & \cdots & 0      & 0      & \cdots & \ddots \\
    \end{array}
    \right)
\end{equation}
Then because of the way in which we chose $\{f_j\}_{j=1}^p$ and $\{g_j\}_{j=1}^\infty$ and because $\trace R_B^{\perp}=0$, if we let $\eta'\coloneqq{}s(A)$ and $\xi=s(B)$ then $A,B$ are 
\[ A = \diag_{H_1\oplus H_2}\angles{\mathbf{0},\eta'} \qquad \text{and} \qquad B = \diag\xi, \]
where $\mathbf{0}\in\ell^\infty(\mathbb{N}_p)$ is the zero sequence and by hypothesis $\xi\prec_p\eta'$.

The heuristic idea of the proof is the following in descriptive informal language. 
First construct a sequence $\xi'$ which is a sparsely compressed version of $\xi$ but sufficient to retain majorization by $\eta$, i.e., $\xi'\prec \eta$.
Next apply Case 2 to obtain a special unitary $U$ for which $E(U(\diag\angles{\mathbf{0},\eta})U^*)=\diag\angles{\mathbf{0},\xi'}$.
Finally, apply another unitary to decompress the diagonal $\angles{\mathbf{0},\xi'}$ to the diagonal $\xi$.

Now inductively choose sequences of nonnegative integers $\{N_m\}_{m=1}^p$ and $\{N'_m\}_{m=0}^p$ with the following properties:
\begin{enumerate}[itemsep=0pt,label=(\roman*)]
\item $0=N'_0<N'_0+1<N_1<N'_1<N'_1+1<N_2<N'_2<\cdots$ \quad if $p=\infty$,  \\
  \hphantom{$0=N'_0<N'_0+1<N_1<N'_1<N'_1+1+$}(or $\cdots<N_p<N'_p$ if $p<\infty$). \label{item:1}
\item For each $m\in\mathbb{N}_p$, whenever $n\ge N_m-(m-1)$, one has
\[ \sum_{k=1}^{n+m} \xi_k \le \sum_{k=1}^n \eta'_k. \] \label{item:2}
\item $\xi_{N_m}+\xi_{N'_m} \le \xi_{N_m-1}$ for each $m\in\mathbb{N}_p$. \label{item:3}
\end{enumerate}
For transparency and brevity, we only loosely describe the construction of the pair of sequences $\{N_m\}_{m=1}^p$ and $\{N'_m\}_{m=0}^p$. 
The construction proceeds in pairs, $N_m,N'_m$. 
We may choose $N_m$ to satisfy property~\ref{item:2} since $\xi\prec_p\eta'$ and hence $\xi\prec_m\eta'$ because $m\le p$.
For this we use the fact that property~\ref{item:2} is an eventual property in the sense that if it holds for some $N_m$ it holds for any larger $N_m$. 
Moreover, because $\xi>0$ and $\xi\downarrow 0$, $\xi$ has infinitely many strictly decreasing jumps, i.e., for infinitely many $j$ one has $\xi_j \lneqq \xi_{j-1}$. 
If necessary, increase $N_m$ so that it satisfies this condition. 
Then since $\xi\to 0$, we may choose $N'_m$ to satisfy property~\ref{item:3}. 
To construct the entire pair of sequences, simply iterate this process while simultaneously ensuring $N_{m+1}>N'_m+1$, which we can guarantee because property~\ref{item:2} is an eventual property.

Next since $N'_0<N_m<N'_m<N_{m+1}<N'_{m+1}$ for all $1\le m<p$, one has for $p=\infty$
 \begin{align*}
   N'_0+1 = 1 &\le N_1<N'_1\le N_2-1<N'_2-1\le N_3-2 < N'_3-2\le  \cdots \\ 
   \cdots &\le N_m-(m-1)<N'_m-(m-1)\le
   N_{m+1}-m<N'_{m+1}-m\le \cdots \\ 
   \Big(\cdots &\le N_p-(p-1)<N'_p-(p-1)\quad\text{for $p<\infty$}\Big).
 \end{align*}
 When $p<\infty$, if we set $N'_{p+1}=\infty$ for convenience of notation, then regardless of whether $p<\infty$ or $p=\infty$ these inequalities partition
\[ \mathbb{N} = \bigsqcup_{m=0}^p \left[N'_m-(m-1),N'_{m+1}-m\right) \]
with each $N_m-(m-1)\in \left[N'_{m-1}-(m-2),N'_m-(m-1)\right)$ and $m\in\mathbb{N}_p$. 

Next define the sequence $\xi'$ which shifts and alters $\xi$ at one point in each $\left[N'_{m-1}-(m-2), N'_m-(m-1)\right)$: for each $m\in\mathbb{N}_p$ (or for each $m\in\mathbb{N}_{p+1}$ if $p<\infty$, in which case the last interval is $[N'_p-(p-1),\infty)$), set

\[ \xi'_k = 
\begin{cases} 
  \xi_{N_m}+\xi_{N'_m} & \text{if $k=N_m-(m-1)$}; \\ 
  \xi_{k+m-1} & \parbox[t]{0.5\textwidth}{if $N'_{(m-1)}-(m-2)\le k<N'_m-(m-1)$ but $k\not=N_m-(m-1)$.} \\ 
\end{cases} 
\] 

This partition of $\mathbb{N}$ ensures that $\xi'$ is well-defined.
Property~\ref{item:3} guarantees that $\xi'$ is monotone decreasing.
And property~\ref{item:2} allows us to conclude that $\xi'\prec \eta'$ which will follow from equations \eqref{eq:17}--\eqref{eq:23}.
We omit their straightforward natural proofs for the sake of clarity of exposition. Equations \eqref{eq:17}-\eqref{eq:22} have natural proofs by induction and \eqref{eq:23} is merely one case for when $p<\infty$.

  For all $m\in\mathbb{N}_p$, 
  \begin{equation}
    \label{eq:17}
    \sum_{j=1}^k \xi'_j = \sum_{j=1}^{k+m-1} \xi_j \quad\text{for}\quad N'_{m-1}-(m-2)\le k<N_m-(m-1),
  \end{equation}
  and 
  \begin{equation}
    \label{eq:22}
    \sum_{j=1}^k \xi'_j  = \xi_{N'_m} + \sum_{j=1}^{k+m-1} \xi_j \quad\text{for}\quad N_m-(m-1)\le k<N'_m-(m-1).
  \end{equation}
   
  When $p<\infty$, the last interval requires separate consideration. That is, if $N'_p-(p-1)\le k<N'_{p+1}-p=\infty$ and $N'_p-(p-1)\le j\le k$ one has $\xi'_j=\xi_{j+p}$ and so
  \begin{equation}
    \label{eq:23}
    \sum_{j=1}^k \xi'_j = \sum_{j=1}^{k+p} \xi_j \quad \text{for} \quad k\in [N'_p-(p-1),\infty)
  \end{equation}

It is now simple to prove that $\xi'\prec\eta'$ using condition~\ref{item:2} and equations \eqref{eq:17}--\eqref{eq:23}.
Indeed, notice that for $m\in\mathbb{N}_p$, if $N'_{m-1}-(m-2)\le k < N_m-(m-1)$ and since also $m-1\in\mathbb{N}_p\cup\{0\}$ and $\xi\prec\eta'$, and considering separately the cases $m=1$ and $m>1$, one has
\begin{equation}
  \label{eq:18}
  \sum_{j=1}^k \xi'_j \ \underset{\text{(\ref{eq:17})}}{=} \ \sum_{j=1}^{k+m-1} \xi_j \  \underset{\underset{\text{or \ref{item:2}}}{\xi\prec\eta'}}{\le} \ \sum_{j=1}^k \eta'_j.
\end{equation}
And for $m\in\mathbb{N}_p$, if $N_m-(m-1) \le k < N'_m -(m-1)$, one has 
\begin{equation}
  \label{eq:16}
  \sum_{j=1}^k \xi'_j \  \underset{\text{(\ref{eq:22})}}{=} \ \xi_{N'_m} + \sum_{j=1}^{k+m-1} \xi_j \ \le \ \sum_{j=1}^{k+m} \xi_j \ \underset{\text{~\ref{item:2}}}{\le} \ \sum_{j=1}^k \eta'_j.
\end{equation}
Finally, if $p<\infty$ and $N'_p-(p-1)\le k<N'_{p+1}-p = \infty$, one has
\begin{equation}
  \label{eq:15}
  \sum_{j=1}^k \xi'_j \underset{\text{(\ref{eq:23})}}{=} \sum_{j=1}^{k+p} \xi_j \underset{\text{~\ref{item:2}}}{\le} \sum_{j=1}^k \eta'_j.
\end{equation}
Recalling that $\mathbb{N}$ is the disjoint union of $\big[N'_{(m-1)}-(m-2),N'_m-(m-1)\big)$ for $m\in\mathbb{N}_p$,  (\ref{eq:17}--\ref{eq:23}) and (\ref{eq:18}--\ref{eq:15}) imply that for all $k\in\mathbb{N}$
\[ \sum_{j=1}^k \xi_j \underset{\text{~(\ref{eq:17}--\ref{eq:23})}}{\le} \sum_{j=1}^k \xi'_j \underset{\text{~(\ref{eq:18}--\ref{eq:15})}}{\le} \sum_{j=1}^k \eta'_j. \]
Passing to the limit as $k\to\infty$ yields $\sum_{j=1}^\infty \xi_j = \sum_{j=1}^\infty \xi'_j = \sum_{j=1}^\infty \eta'_j$ since $\xi\prec\eta'$.
Hence $\xi'\prec\eta'$. 
\vspace{0.2cm}

By Case 2 and our earlier reduction (first paragraph of proof) applied to $A=\diag_{H_1\oplus H_2}\angles{\mathbf{0},\eta'}$ and $B'=\diag_{H_1\oplus H_2}\angles{\mathbf{0},\xi'}$, we obtain a unitary $U$ of the form $I_{H_1}\oplus W$ for which
\[ E(UAU^*) = B' = \diag_{H_1\oplus H_2}\angles{\mathbf{0},\xi'}. \]
Since $U$ has this form, for all $f,f'\in\{f_j\}_{j=1}^p$ and $g\in\{g_j\}_{j=1}^\infty$ one has
\[ 0 = \angles{UAU^* f,f'} = \angles{UAU^* f,g} = \angles{UAU^* g,f}. \]
Then for all $m\in\mathbb{N}_p$ define $V_m$ on $\spans\{f_m,g_{N_m-(m-1)}\}$ given by the unitary $2\times 2$ matrix
\begin{equation}
  \label{eq:20}
  V_m =  
  \begin{pmatrix} 
    a_m  & b_m \\
    -b_m & a_m \\
  \end{pmatrix}
\end{equation}
where 
\[ a_m = \sqrt{\frac{\xi_{N'_m}}{\xi_{N_m}+\xi_{N'_m}}} \qquad\text{and}\qquad b_m = \sqrt{\frac{\xi_{N_m}}{\xi_{N_m}+\xi_{N'_m}}}. \]
Then $C_m =
\begin{psmallmatrix}
  0 & 0 \\
  0 & \xi'_{N_m-(m-1)} \\
\end{psmallmatrix}
$ is the compression of $UAU^*$ to $\spans\{f_m,g_{N_m-(m-1)}\}$ and if one interprets $V_m$ as canonically acting on this same subspace, one computes
\begin{equation}
  \label{eq:13}
  V_mC_mV_m^* =   
  \left(
    \begin{matrix}
      \xi_{N_m} & {\Huge *} \\ {\Huge *} & \xi_{N'_m} \\
    \end{matrix}
  \right)
\end{equation}
because
\begin{align}
  V_m C_m V_m^*  
  &=  \nonumber
  \begin{pmatrix} 
    a_m  & b_m \\
    -b_m & a_m \\
  \end{pmatrix}
  \begin{pmatrix} 
    0 & 0                \\
    0 & \xi'_{N_m-(m-1)} \\
  \end{pmatrix}
  \begin{pmatrix} 
    a_m & -b_m \\
    b_m & a_m  \\
  \end{pmatrix}
  \\
  &= \nonumber
  \begin{pmatrix} 
    0 & b_m(\xi_{N_m}+\xi_{N'_m}) \\
    0 & a_m(\xi_{N_m}+\xi_{N'_m}) \\
  \end{pmatrix}
  \begin{pmatrix} 
    a_m & -b_m \\
    b_m & a_m  \\
  \end{pmatrix}
  \\
  &= \nonumber
  \begin{pmatrix} 
    b_m^2(\xi_{N_m}+\xi_{N'_m}) & a_mb_m(\xi_{N_m}+\xi_{N'_m}) \\
    a_mb_m(\xi_{N_m}+\xi_{N'_m}) & a_m^2(\xi_{N_m}+\xi_{N'_m}) \\
  \end{pmatrix}
  \\
  &= \nonumber
  \left(
    \begin{matrix}
      \xi_{N_m} & {\Huge *} \\ {\Huge *} & \xi_{N'_m} \\
    \end{matrix}
  \right).
\end{align}

Next let $H'=H\circleddash\spans\{f_m,g_{N_m-(m-1)}\}_{m=1}^p$ and let $V$ be the unitary defined by
\[ V = I_{H'} \oplus \bigoplus_{m=1}^p V_m. \]
From the above computations, one can see that the diagonal operator
\[ E(VUAU^*V^*) = \Pi^* (\diag\xi) \Pi, \]
for an appropriate permutation $\Pi$ of the basis $\{e_n\}_{n\in\mathbb{N}}$.
But conjugation by operators which permute the basis corresponding to $E$, in particular $\Pi$, commutes with $E$, and $\Pi VU$ is unitary, so  $E((\Pi VU)A(\Pi VU)^*)=\diag\xi$. 
\end{proof}

\begin{remark}[\protect{\bfseries Orthostochasticity}] \label{rem:pmajorizationimpliesorthostochastic}
  In the above proof, if $A$ was already diagonalized with respect to the basis $\{e_n\}_{n\in\mathbb{N}}$, then all the unitary operators either are orthogonal with respect to this basis, or can be chosen as such.
Indeed, $\Pi$ and $V$ are orthogonal, and $U=I_{H_1}\oplus W$ where $W$, coming from Theorem~\ref{thm:kwrangeprojectionidentity}, can be chosen to be orthogonal by \cite[Corollary 6.1, NS(ii$'$) and S(ii$'$)]{KW-2010-JoFA}.
A consequence of this combined with Lemma~\ref{lem:kwstochasticcontraction} is that if $\xi,\eta\in c_0^+$, $\xi\prec_p\eta$ and if
\[ \abs{\xi^{-1}(0)} \le \abs{\eta^{-1}(0)} \le p + \abs{\xi^{-1}(0)}, \]
then there exists an orthostochastic matrix $Q$ for which  $\xi=Q\eta$. 
\end{remark}

\begin{example}[\protect{\cite[Proposition 6.10, Example 6.11]{KW-2010-JoFA}}]
 \label{ex:nonnecessity-of-p-majorization}
The following example, in conjunction with Lemma 1.6, will show that the condition $s(B)\prec_p s(A)$ is not necessary for $B\in E(\mathcal{U}(A))$ and  $p\coloneqq{}\trace R_A^\perp -\trace R_B^\perp <\infty$.
In particular, the converse of Theorem~\ref{thm:sufficiencyofpmajorization} fails for this case, but is true when $\trace R_A^\perp -\trace R_B^\perp$ is either infinite or undefined as proved by Corollary~\ref{cor:conjecturetrueforinfinitemo}.

A counterexample is the following. Let $\tilde\eta = \angles{0,\eta}$ where $0<\eta\in c_0^*$.
Let $Q$ be an orthostochastic matrix with the property that $Q_{ij}=0$ if and only if $i>j>1$ (so that otherwise $Q_{ij}>0$ and rows and columns sum to one), e.g., \cite[Example 6.11]{KW-2010-JoFA}. 
Then choosing $\xi\coloneqq{}Q\tilde{\eta}$ we claim that $\xi \not\prec_1 \eta$.
One can see this from the calculation
  \begin{align}
    \sum_{i=1}^n \xi^*_i 
    &\ge \sum_{i=1}^n \xi_i = \sum_{i=1}^n \sum_{j=1}^\infty Q_{ij}\tilde\eta_j \nonumber \\ 
    &= \sum_{i=1}^n \sum_{j=2}^\infty Q_{ij}\eta_{j-1} \nonumber \\
    &= \sum_{j=2}^n \sum_{i=1}^n Q_{ij}\eta_{j-1} + \sum_{j=n+1}^\infty \sum_{i=1}^n Q_{ij}\eta_{j-1} \nonumber \\
    &= \sum_{j=1}^{n-1} \eta_j + \sum_{j=n+1}^\infty \sum_{i=1}^n Q_{ij}\eta_{j-1} > \sum_{j=1}^{n-1} \eta_j, \label{eq:35}
  \end{align}
where the latter inequality follows since $\eta>0$. 

This example can be easily extended to create similar orthostochastic examples (e.g., 
$\tilde Q = \bigoplus_1^{p-1}\begin{psmallmatrix} 
\nicefrac{1}{2} & \nicefrac{1}{2} \\ 
\nicefrac{1}{2} & \nicefrac{1}{2} \\ 
\end{psmallmatrix}\oplus Q$) 
for when $1<p<\infty$ and $0<\xi \not\prec_p \eta$, but $\xi = \tilde Q\tilde\eta$ for $\tilde\eta = \angles{0,\eta_1,0,\eta_2,\ldots,0,\eta_p,\eta_{p+1},\ldots}$ with $p$ zeros.
Then since $\tilde Q$ is orthostochastic, by Lemma~\ref{lem:kwstochasticcontraction} one has that $\diag\xi=U(\diag\tilde\eta)U^*$ for some orthogonal matrix $U$ with $\tilde Q_{ij}=\abs{U_{ij}}^2$.
Therefore $p$-majorization for any $1\le p<\infty$ is not necessary to characterize $E(\mathcal{U}(A))$. 

To verify $\xi\not\prec_p\eta$ observe that for sufficiently large $n$, one has
\begin{align*}
  \sum_{i=1}^n \xi^*_i &= \sum_{i=1}^{p-1} \eta_i + \sum_{i=1}^{n-2p+2} (Q\angles{0,\eta_p,\eta_{p+1},\ldots}^*)_i \\
  &\underset{(\ref{eq:35})}{>} \sum_{i=1}^{p-1} \eta_i + \sum_{i=1}^{n-2p+1} \eta_{i+p-1} \\
  &= \sum_{i=1}^{p-1} \eta_i + \sum_{i=p}^{n-p} \eta_i =  \sum_{i=1}^{n-p} \eta_i. \\
\end{align*}

\end{example}

\paragraph{Discussion on majorizations.} 
The Figures~\ref{fig:maj-hierarchy}--\ref{fig:lin-maj-hierarchy} at the end of the introduction show the interconnections between various types of majorization.
Here through Proposition~\ref{prop:strong-maj-vs-p-maj-2} we begin a discussion of some of these interconnections.
Next we exhibit a relationship between strong majorization $\preccurlyeq$ (recall Definition~\ref{def:strong-majorization}) and $\infty$-majorization.
As stated, it may seem to apply to summable sequences, but in fact the hypotheses, majorization and not strong majorization, negate that possibility as addressed just after the proof of Proposition~\ref{prop:strong-maj-vs-p-maj}.
\begin{proposition}
  \label{prop:strong-maj-vs-p-maj}
  If $\xi,\eta\in c_0^+$ and $\xi\prec\eta$, then 
\[ \xi\not\preccurlyeq\eta  \implies \xi\prec_\infty\eta. \] 
\end{proposition}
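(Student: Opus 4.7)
The strategy is direct: given $\xi\prec\eta$ together with the additional hypothesis $\xi\not\preccurlyeq\eta$, I will show $\xi\prec_p\eta$ for every $p<\infty$, which is exactly $\xi\prec_\infty\eta$. Introduce the running defect $S_n := \sum_{k=1}^n(\eta_k^*-\xi_k^*)$. Majorization yields $S_n\ge 0$ for all $n$, while the failure of strong majorization upgrades this to $\liminf_n S_n > 0$; so fix $c>0$ and $N_0$ with $S_n\ge c$ for every $n\ge N_0$.

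The key algebraic observation is that the desired inequality $\sum_{k=1}^{n+p}\xi_k^* \le \sum_{k=1}^n\eta_k^*$ rearranges to $\sum_{k=n+1}^{n+p}\eta_k^* \le S_{n+p}$. Once $n+p\ge N_0$, the right-hand side is bounded below by $c$, so it suffices to drive the left-hand side below $c$ for $n$ large.

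For that I need $\eta^*_k\to 0$, which requires $\eta^*$ to have infinite support. This is exactly where the hypothesis $\xi\not\preccurlyeq\eta$ does real work: if instead $\eta\in\ell^1$, then $\xi\prec\eta$ puts $\xi\in\ell^1$ with equal sum, forcing $\lim_n S_n = 0$ and hence $\xi\preccurlyeq\eta$, a contradiction (this is the equivalence of $\prec$ and $\preccurlyeq$ in the summable case recorded after Definition~\ref{def:strong-majorization}). So $\eta\notin\ell^1$, whence $\eta^*_k\to 0$. Then the crude bound $\sum_{k=n+1}^{n+p}\eta_k^* \le p\,\eta^*_{n+1}$ is eventually less than $c$ for any fixed $p$, producing an $N_p\ge N_0$ beyond which $\sum_{k=n+1}^{n+p}\eta_k^* < c \le S_{n+p}$. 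This is $\xi\prec_p\eta$, and since $p$ was arbitrary, $\xi\prec_\infty\eta$.

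There is no substantive obstacle once the telescoping rearrangement is in hand; the only subtlety worth highlighting is that the hypothesis $\xi\not\preccurlyeq\eta$ (with $\xi\prec\eta$) automatically forces the nonsummable regime, which is precisely the regime in which a uniform positive lower bound on $S_n$ and vanishing length-$p$ tails of $\eta^*$ can coexist. This matches the paper's remark that this implication ``is only applicable when both sequences in question are in $c_0^+\setminus\ell^1$.''
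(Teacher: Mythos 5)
Your argument is correct and takes essentially the same route as the paper: both proofs extract a uniform positive lower bound $c>0$ on the running defect $S_n=\sum_{k=1}^n(\eta_k^*-\xi_k^*)$ from the hypothesis $\xi\not\preccurlyeq\eta$ and then beat a length-$p$ tail sum (of $\eta^*$ in your version, of $\xi^*$ in the paper's) by noting that $p$ times a null sequence is eventually below $c$; your rearrangement $\sum_{k=1}^{n+p}\xi_k^*\le\sum_{k=1}^n\eta_k^* \iff \sum_{k=n+1}^{n+p}\eta_k^*\le S_{n+p}$ is a transparent algebraic repackaging of the same estimate. One small inaccuracy worth fixing: $\eta^*_k\to 0$ holds automatically for any $\eta\in c_0^+$, since by definition the monotonization $\eta^*$ lies in $c_0^*$; it does not require $\eta^*$ to have infinite support, so while your deduction $\eta\notin\ell^1$ is correct and explains why the implication is nonvacuous, it is not what justifies $\eta^*_k\to 0$.
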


\begin{proof}
  It suffices to show that if $\xi\prec\eta$, then the $\liminf$ condition implies $\xi\prec_p \eta$ for every $p\in\mathbb{N}$.
Indeed, suppose that 
\[ \liminf_{n\to\infty}\sum_{k=1}^n \left(\eta^*_k-\xi^*_k\right)=\epsilon>0. \] 
Then since $\xi\in c_0^+$ one can choose $N\in\mathbb{N}$ for which $\xi^*_k<\frac{\epsilon}{2p}$ for all $k\ge N$.
Then for all $n$ sufficiently large for which both $n\ge N$ and $\sum_{k=1}^n (\eta^*_k-\xi^*_k) > \frac{\epsilon}{2}$ one has that
\[ \sum_{k=1}^{n+p} \xi^*_k < \sum_{k=1}^n \xi^*_k + p\cdot\frac{\epsilon}{2p} < \sum_{k=1}^n \eta^*_k. \qedhere \]
\end{proof}

Note that Proposition~\ref{prop:strong-maj-vs-p-maj} applies only to $\xi,\eta\notin\ell^1$, since if either $\xi$ or $\eta$ is summable, majorization implies both are summable, and in this case majorization and strong majorization are equivalent (see Definition~\ref{def:strong-majorization}, succeeding comment).
Furthermore, the converse of Proposition~\ref{prop:strong-maj-vs-p-maj} fails because there exist sequences $\xi,\eta\in c_0^*\setminus\ell^1$ for which  $\xi\prec_\infty\eta$ hence also $\xi\prec\eta$, and yet $\xi\preccurlyeq\eta$, as exhibited in the following example.
\begin{example}
  \label{ex:half-two-ampliation}
  In this exposition so far we have not considered any hands-on
  examples of $\infty$-majorization. In refuting this possible converse of Proposition~\ref{prop:strong-maj-vs-p-maj} we provide one, but first we explain our natural motivation for it.
  
  \emph{Motivation}.
Suppose $A\in K(H)^+$ has $\trace R_A=\infty=\trace R_A^\perp$.
Then there is some basis with respect to which $A=\diag\angles{s_1(A),0,s_2(A),0,\ldots}$.
It is a natural question to ask if there exists some $B\in E(\mathcal{U}(A))$ with $\trace R_B^\perp=0$.
The answer is yes by Theorem~\ref{thm:sufficiencyofpmajorization}, but there is a more straightforward way to see this in this case.
Indeed, if we let $R_j$ denote the $2\times 2$ rotation by $\nicefrac{\pi}{4}$ acting on the subspace $\spans\{e_{2j-1},e_{2j}\}$, and $U=\bigoplus R_j$, then $B\coloneqq{}E(UAU^*)=\diag\frac{1}{2}D_2 s(A)$, where $D_2\phi$ denotes the $2$-ampliation of $\phi\in\ell^\infty$, i.e., $D_2\phi=\angles{\phi_1,\phi_1,\phi_2,\phi_2,\ldots}$. 

  \emph{Example}.
Fix any $\eta\in c_0^*$ and choose $\xi\coloneqq{}\frac{1}{2}D_2\eta\prec_\infty \eta$, where $\infty$-majorization is easily verified (hint: $N_p=p$).
Then
\[ \sum_{j=1}^k (\eta_j-\xi_j) =
\begin{cases}
  \sum_{j=n+1}^{2n} \eta_j &\text{if $k=2n$, $n\in\mathbb{N}$;} \\
  \sum_{j=n+1}^{2n-1} \eta_j + \frac{1}{2}\eta_n  &\text{if $k=2n-1$, $n\in\mathbb{N}$.} \\
\end{cases}
\]
From this it follows that 
\begin{equation}
  \label{eq:14}
  \left\lfloor \frac{k}{2} \right\rfloor \eta_k \le \sum_{j=1}^k (\eta_j-\xi_j) \le \left\lceil \frac{k}{2} \right\rceil \eta_{\lceil \nicefrac{k}{2} \rceil}. 
\end{equation}
From the second inequality in (\ref{eq:14}) it follows that  if $\liminf k \eta_k=0$, then $\xi=\frac{1}{2}D_2\eta\preccurlyeq\eta$ (e.g. for $\eta=\angles{((k+1)\log(k+1))^{-1}}$, $\lim k \eta_k=0$).
However, the first inequality of (\ref{eq:14}) shows the inverse, that if $\liminf k \eta_k>0$, then $\xi=\frac{1}{2}D_2\eta\not\preccurlyeq\eta$ (e.g. for $\eta=\angles{k^{-1}}$, $\lim k \eta_{2k}=\frac{1}{2}$).
The first example provides the failure of the converse of Proposition~\ref{prop:strong-maj-vs-p-maj}.
Moreover one has, for $\eta\in c_0^*$, $\frac{1}{2}D_2\eta\preccurlyeq\eta$ if and only if $\liminf k\eta_k=0$. 
\end{example}

The previous example shows that for appropriate $\eta\in c_0^*$, there exist $\xi\in c_0^*$ which are counterexamples to the converse of Proposition~\ref{prop:strong-maj-vs-p-maj}.
However, with more work one can show for \emph{every} sequence $\eta\in c_0^+$, there is some $\xi\in c_0^*$ with $\xi\prec_\infty\eta$ and $\xi\preccurlyeq\eta$, as the next proposition shows.
Though it will not be used later in this paper, we present it here for completeness.

\begin{proposition}
  \label{prop:strong-maj-vs-p-maj-2}
  For every $\eta\in c_0^+$ there exists some $\xi\in c_0^*$ with $\xi\prec_\infty\eta$ and $\xi\preccurlyeq\eta$.
\end{proposition}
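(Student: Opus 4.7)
We reduce to the case $\eta\in c_0^*$ (the general case follows by passing to $\eta^*$). If $\eta$ has finite support, taking $\xi:=\eta$ works trivially: $S\equiv 0$ gives $\xi\preccurlyeq\eta$, and the eventual vanishing of $\eta$ gives $\xi\prec_\infty\eta$. So assume $\eta$ has infinite support, hence $\eta_k>0$ for all $k$.

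The plan is to construct $\xi$ by prescribing its partial-sum function $X(n):=\sum_{k=1}^n\xi_k$. Choose inductively integers $0=n_0<n_1<n_2<\cdots$ and a positive decreasing sequence $\epsilon_j\downarrow 0$ with $\epsilon_j/\eta_{n_{j-2}+1}\to\infty$ (for instance, $\epsilon_j:=\sqrt{\eta_{n_{j-2}+1}}$ for $j\ge 3$). Setting $E(n):=\sum_{k=1}^n\eta_k$, let $X$ be the continuous piecewise-linear function interpolating $(0,0)$ and the data points $(n_j,E(n_j)-\epsilon_j)$ for $j\ge 1$, and define $\xi_k:=X(k)-X(k-1)$. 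Then $\xi$ is constant on each block $(n_{j-1},n_j]$, equal to the local slope $\mu_j:=\bar\eta_j+(\epsilon_{j-1}-\epsilon_j)/\ell_j$, where $\bar\eta_j:=(E(n_j)-E(n_{j-1}))/\ell_j$ and $\ell_j:=n_j-n_{j-1}$.

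Assuming the slopes $(\mu_j)$ are non-increasing, $\xi\in c_0^*$ (slopes are positive, and $\mu_j\to 0$ since $\bar\eta_j\to 0$). Concavity of $E$ (equivalent to $\eta$ being non-increasing) forces $X\le E$, yielding $\xi\prec\eta$; a direct computation with the convex combination defining $X$ gives $S(n):=E(n)-X(n)\ge\epsilon_j$ for all $n$ in block $j$, with equality at $n=n_j$, so $\liminf_n S(n)=0$ and $\xi\preccurlyeq\eta$. For $\xi\prec_\infty\eta$, fix $p$ and take $n$ in block $j$ with $j$ large enough that $\ell_{j-1}\ge p$; then $\sum_{k=n-p+1}^n\eta_k\le p\eta_{n_{j-2}+1}$ is dominated by $\epsilon_j\le S(n)$ for $j$ sufficiently large, by our choice of $\epsilon_j$. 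The main obstacle is ensuring the monotonicity of $(\mu_j)$, which rearranges to $\bar\eta_j-\bar\eta_{j+1}\ge (\epsilon_j-\epsilon_{j+1})/\ell_{j+1}-(\epsilon_{j-1}-\epsilon_j)/\ell_j$; since $\eta$ has infinitely many positions of strict decrease (else it would be constant, contradicting $\eta_k>0$ and $\eta_k\to 0$), one picks each $n_j$ to straddle such a position so that $\bar\eta_j>\bar\eta_{j+1}$ strictly, and then inflates $\ell_j$ so that the correction term on the right becomes negligible compared to the gap on the left, carrying out the induction.
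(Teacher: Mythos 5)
Your proposal is correct in its essentials and takes a genuinely different route from the paper's.  The paper's proof dichotomizes on $\liminf k\eta_k$: when it is zero it takes $\xi=\tfrac12 D_2\eta$ (the half--two-ampliation of Example~\ref{ex:half-two-ampliation}); when it is positive it builds $\xi$ by explicitly concatenating shifted pieces of $\eta$ with blocks of repeated terms $\angles{\eta_{N_k},\dots,\eta_{N_k}}$, tracking the partial-sum deficit by hand.  You instead prescribe the partial-sum profile $X(n)$ directly, as the piecewise-linear interpolant of $(n_j, E(n_j)-\epsilon_j)$ where $E(n)=\sum_{k\le n}\eta_k$, and read off $\xi$ as the successive slopes.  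This unifies both cases in one construction and lets concavity of $E$ (equivalently, monotonicity of $\eta$) do essentially all the work: it gives $X\le E$ (hence $\xi\prec\eta$) and the lower bound $S(n)\ge(1-t)\epsilon_{j-1}+t\epsilon_j\ge\epsilon_j$ on block $j$ for free, while $S(n_j)=\epsilon_j\downarrow 0$ gives strong majorization and $p\,\eta_{n_{j-2}+1}\le\epsilon_j$ (for $j$ large) gives $p$-majorization for every $p$.  The price is the check that the block slopes $\mu_j=\bar\eta_j+(\epsilon_{j-1}-\epsilon_j)/\ell_j$ are non-increasing, which you correctly flag as the crux.  One small simplification available to you there: the ``straddle a strict decrease'' device is unnecessary.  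Once $n_0,\dots,n_j$ (and hence $\epsilon_{j-1},\epsilon_j,\ell_j,\bar\eta_j,\mu_j>0$) are fixed, both $\bar\eta_{j+1}$ and $(\epsilon_j-\epsilon_{j+1})/\ell_{j+1}$ tend to $0$ as $\ell_{j+1}\to\infty$ (for $\bar\eta_{j+1}$, use $\eta\in c_0$ and split the block tail), so $\mu_{j+1}\to 0<\mu_j$ and choosing $n_{j+1}$ large enough suffices without any appeal to strict decreases.  Finally, note the minor collision of your $E(n)$ with the paper's conditional expectation $E$; you would want a different symbol if this were inserted verbatim.
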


\begin{proof}
  Without loss of generality, we may assume $\eta\in c_0^*$ since the definitions of $\infty$-majorization and strong majorization depend only on its monotonization $\eta^*$. 

  If $\liminf k \eta_k =0$, set $\xi=\frac{1}{2}D_2\eta$.
By the example preceding this proposition, we have $\xi\prec_\infty\eta$ and $\xi\preccurlyeq\eta$. 

  It therefore suffices to assume $\liminf k \eta_k>0$.
For this we adopt the following conventions.
Let juxtaposition of finite sequences denote the concatenation of those sequences.
Let $\ell(\phi)$ denote the length of a finite sequence $\phi$.
Let $\sum\phi$ be an abbreviation of $\sum_{j=1}^{\ell(\phi)}\phi_j$ and $\sum_a^b \phi$ an abbreviation for $\sum_{j=a}^b \phi_j$.
Finally, if $\phi$ is a sequence (not necessarily finite) and $a,b\in\dom\phi$ with $a\le b$, let $\phi|_a^b$ denote the finite subsequence $\angles{\phi_a,\phi_{a+1},\ldots,\phi_b}$.

  If $\liminf k \eta_k>0$ and $\eta\in c_0^*$, then $\eta>0$.
To construct $\xi$ we proceed inductively.
Let $\angles{\epsilon_k}$ be any positive sequence converging to zero and let $N'_0=1$.
Then let $N_1$ be the smallest positive integer for which $0<\eta_{N_1}\le \frac{\eta_{N'_0}}{2}$.
Let $p_1$ be the largest positive integer for which $p_1\eta_{N_1}\le \eta_{N'_0}$, i.e., $p_1\coloneqq{}\left\lfloor \frac{\eta_{N'_0}}{\eta_{N_1}}\right\rfloor\ge 2$ and $(p_1+1)\eta_{N_1}>\eta_{N'_0}=\eta_1$.
Then setting
\[ \xi^{(1)}=\eta|_2^{N_1}\overset{p_1}{\overbrace{\angles{\eta_{N_1},\ldots,\eta_{N_1}}}}, \]
this choice of $p_1$ guarantees that 
\begin{equation}
  \label{eq:28}
  0\le \sum_1^{N_1} \eta - \sum \xi^{(1)} = \eta_1 - p_1 \eta_{N_1} < \eta_{N_1}.
\end{equation}
Denote the difference between the lengths of $\xi^{(1)}$ and $\angles{\eta_j}_1^{N_1}$ by 
\[ M_1\coloneqq{}\ell(\xi^{(1)})-N_1=p_1-1\ge 1 \]
and then exploiting $\eta\to 0$, choose $N'_1>\ell(\xi^{(1)})$ for which $\sum\eta\big|_{N'_1-M_1}^{N'_1-1}<\epsilon_1$.
Then one has
\begin{equation}
  \label{eq:29}
  0\le \sum_1^{N'_1-1}\eta-\sum_1^{N'_1-1}\xi^{(1)}\eta\big\vert_{N_1+1}^{N'_1-1} < \eta_{N_1} + \epsilon_1 
\end{equation}
because, noting that the length of $\xi^{(1)}\eta\big\vert_{N_1+1}^{N'_1-1}$ is greater by $M_1$ than $N'_1-1$, one has
\begin{equation}
  \label{eq:25}
  \begin{aligned}
    0\le \sum_1^{N'_1-1}\eta-\sum_1^{N'_1-1}\xi^{(1)}\eta\big\vert_{N_1+1}^{N'_1-1} 
    &= \sum_1^{N'_1-1}\eta-\sum\left(\xi^{(1)}\eta\big\vert_{N_1+1}^{N'_1-1}\right)+\sum\eta\big\vert_{N'_1-M_1}^{N'_1-1} \\
    &= \sum_1^{N_1}\eta-\sum\xi^{(1)} + \sum\eta\big\vert_{N'_1-M_1}^{N'_1-1} \\
    &< \eta_{N_1} + \epsilon_1.
  \end{aligned}
\end{equation}

Continuing with the induction, suppose that for some $k\in\mathbb{N}$ as in the previous $k=1$ case we are given a finite decreasing sequence $\xi^{(k)}$ and $N_k<N_k+M_k=\ell(\xi^{(k)})<N'_k$, with $M_k\ge k$ and with the last term of $\xi^{(k)}$ being equal to $\eta_{N_k}$ and $\xi^{(k)}$ satisfying 
\begin{equation}
  \label{eq:19}
  0\le \sum_1^{N_k} \eta - \sum \xi^{(k)} <\eta_{N_k} \quad\text{and}\quad \sum \eta\big\vert_{N'_k-M_k}^{N'_k-1}<\epsilon_k.
\end{equation}
As in the previous $k=1$ case, by an argument identical to that of (\ref{eq:25}), together these (\ref{eq:19}) inequalities imply
\begin{equation}
  \label{eq:24}
  0\le \sum_1^{N'_k-1}\eta-\sum_1^{N'_k-1}\xi^{(k)} \eta\big\vert_{N_k+1}^{N'_k-1} < \eta_{N_k}+\epsilon_k.
\end{equation}

We then construct an extension $\xi^{(k+1)}$ of $\xi^{(k)}$ and $N_{k+1},M_{k+1},N'_{k+1}$ with $N'_k<N_{k+1}$ satisfying all of the properties in the preceding paragraph for $k+1$ replacing $k$ in all instances.
The procedure mimics the base case as follows.
Let $N_{k+1}$ be the smallest integer greater than $N'_k$ for which $0<\eta_{N_{k+1}}\le\frac{\eta_{N'_k}}{2}$.
Along with (\ref{eq:19}) this inequality implies that
\[ \sum \xi^{(k)} + 2\eta_{N_{k+1}} \le \sum_1^{N_k}\eta +
  \eta_{N'_k}. \]
Hence letting $p_{k+1}$ be the largest positive integer for which 
\begin{equation}
  \label{eq:30}
  \sum \xi^{(k)} + p_{k+1} \eta_{N_{k+1}} \le \sum_1^{N_k}\eta +
  \eta_{N'_k},
\end{equation}
one has $p_{k+1}\ge 2$.
And by the maximality of $p_{k+1}$ one also has 
\begin{equation}
  \label{eq:33}
  \sum \xi^{(k)} + (p_{k+1}+1)\eta_{N_{k+1}} > \sum_1^{N_k}\eta +
  \eta_{N'_k}.
\end{equation}
Adding to both sides of (\ref{eq:30}) and (\ref{eq:33}) the $\eta$-terms from $N_k+1$ to $N_{k+1}$ excluding $N'_k$, one defines $\xi^{(k+1)}$ as denoted and obtains
\begin{equation}
  \label{eq:31}
  \sum \underset{\xi^{(k+1)}}{\underbrace{\xi^{(k)} \eta\big\vert_{N_k+1}^{N'_k-1} \eta\big\vert_{N'_k+1}^{N_{k+1}} \overset{p_{k+1}\ \text{times}}{\overbrace{\angles{\eta_{N_{k+1}},\ldots,\eta_{N_{k+1}}}}}}} \le  \sum_1^{N_{k+1}}\eta,
\end{equation}
and 
\begin{equation}
  \label{eq:32}
  \sum \xi^{(k+1)} + \eta_{N_{k+1}} > \sum_1^{N_{k+1}} \eta.
\end{equation}
So from (\ref{eq:31})--(\ref{eq:32}) the difference of the (\ref{eq:32}) sums is nonnegative and less than $\eta_{N_{k+1}}$.
This shows that $\xi^{(k+1)}$, as defined in (\ref{eq:31}), satisfies the first inequalities of (\ref{eq:19}).
Note further that $\xi^{(k+1)}$ is decreasing since $\xi^{(k)}$ and $\eta$ are decreasing and because the last term of $\xi^{(k)}$ is $\eta_{N_k}$. 

As for $M_1$, and recalling that $p_{k+1}\ge 2$, set
\begin{align*}
  M_{k+1}
  &{\coloneqq}\  \ell(\xi^{(k+1)})-N_{k+1}=(\ell(\xi^{(k)})+N_{k+1}-(N_k+1)+p_{k+1})-N_{k+1}
  \\ 
  &= M_k-1+p_{k+1}> M_k\ge k,
\end{align*}
and hence $M_{k+1}\ge k+1$.
Next, since $\eta\in c_0^*$ we may choose some $N'_{k+1}>\ell(\xi^{(k+1)})$ satisfying the last inequality of (\ref{eq:19}), for $k+1$ replacing $k$.
These facts again imply (\ref{eq:24}) for $k+1$ replacing $k$ by an argument identical to (\ref{eq:25}).  

By induction, we have constructed $\xi^{(k)},N_k,M_k,N'_k$ with the desired properties (i.e., paragraph containing inequalities (\ref{eq:19}) and (\ref{eq:24})).
Furthermore, by construction each $\xi^{(k)}$ is an extension of the preceding ones.
Thus, the infinite sequence $\xi$ given by $\xi_j \coloneqq{} \xi^{(k)}_j$ when $1\le j\le \ell(\xi^{(k)})$ is well-defined.
Finally it suffices to show that $\xi\prec_\infty \eta$ and $\xi\preccurlyeq\eta$. 

In order to prove $\xi\prec_\infty\eta$ it suffices to observe the following two facts.
Firstly, if $1\le m\le N_1$ then 
\begin{equation}
  \label{eq:27}
  \sum_1^m \xi = \sum_1^m \eta\big\vert_2^{N_1}\angles{\eta_{N_1}} \le \sum_1^m \eta.
\end{equation}
Secondly, for each $k\in\mathbb{N}$, if $N_k<m\le N_{k+1}$ then
\begin{equation}
  \label{eq:26}
  \sum_1^{m+M_k} \xi \le \sum_1^m \eta
\end{equation}
because
\begin{align*}
  \sum_1^{m+M_k} \xi 
  &= \sum_1^{m+M_k} \xi^{(k)}\eta\big\vert_{N_k+1}^{N'_k-1} \eta\big\vert_{N'_k+1}^{N_{k+1}}\angles{\eta_{N_{k+1}}} \\
  &= \sum_1^{\ell(\xi^{(k)})} \xi^{(k)} + \sum_{\ell(\xi^{(k)})+1}^{m+M_k} \xi^{(k)}\eta\big\vert_{N_k+1}^{N'_k-1} \eta\big\vert_{N'_k+1}^{N_{k+1}}\angles{\eta_{N_{k+1}}} \\
  &= \sum_1^{\ell(\xi^{(k)})} \xi^{(k)} + \sum_1^{m-N_k} \eta\big\vert_{N_k+1}^{N'_k-1} \eta\big\vert_{N'_k+1}^{N_{k+1}}\angles{\eta_{N_{k+1}}} \\
  &\le \sum_1^{N_k} \eta + \sum_1^{m-N_k} \eta\big\vert_{N_k+1}^{N_{k+1}} = \sum_1^m \eta. 
\end{align*} 
Indeed, since $M_k\uparrow\infty$, these inequalities (\ref{eq:27}) and (\ref{eq:26}) imply $\xi\prec_k\eta$ for infinitely many $k\in\mathbb{N}$, i.e., $\xi\prec_\infty\eta$.

Finally, (\ref{eq:24}) implies $\xi\preccurlyeq\eta$ since
\[ \liminf_{k\to\infty} \sum_{j=1}^k (\eta_j-\xi_j) \le \liminf_{k\to\infty} \sum_{j=1}^{N'_k-1} (\eta_j-\xi_j) \le \liminf_{k\to\infty} (\eta_{N_k}+\epsilon_k) = 0. \qedhere \]
\end{proof}

\paragraph{Operator consequences} 
The following corollary of Theorem~\ref{thm:sufficiencyofpmajorization} and Proposition~\ref{prop:strong-maj-vs-p-maj} gives a method of ensuring membership in $E(\mathcal{U}(A))$, for $A\in K(H)^+$.
The purpose of this corollary is to provide a more easily computable way to make this determination in special cases.
For if one is given sequences $\xi,\eta\in c_0^+$, establishing $\xi\prec_p\eta$ or its negation seems more difficult than verifying $\xi\not\preccurlyeq\eta$, which requires only $\xi\prec\eta$ and the strict positivity of the associated $\liminf$ condition. 

\begin{corollary}\label{cor:sufficiencyofpositiveliminf}
Suppose $A,B\in K(H)^+$, $B\in \mathcal{D}$, $\trace R_B^\perp \le\trace R_A^\perp $, and $s(B)\prec s(A)$ but $s(B)\not\preccurlyeq s(A)$, then $B \in E(\mathcal{U}(A))$. 
\end{corollary}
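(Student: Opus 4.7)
The plan is a direct appeal to the two preceding results. The hypothesis $s(B)\prec s(A)$ together with $s(B)\not\preccurlyeq s(A)$ is precisely the setup of Proposition~\ref{prop:strong-maj-vs-p-maj} (note that this forces $s(A),s(B)\in c_0^+\setminus\ell^1$, since otherwise majorization would coincide with strong majorization and the two hypotheses would contradict each other). Applying that proposition yields $s(B)\prec_\infty s(A)$, i.e.\ $s(B)\prec_p s(A)$ for every $p\in\mathbb{N}$.

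Next I would invoke Theorem~\ref{thm:sufficiencyofpmajorization} with $p=\infty$. Its hypotheses require $\trace R_B^\perp\le \trace R_A^\perp$ (given), $s(B)\prec_p s(A)$ (just established for $p=\infty$), and $\trace R_A^\perp\le \trace R_B^\perp + p$, which is vacuously satisfied when $p=\infty$. The conclusion is exactly $B\in E(\mathcal{U}(A))$.

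There is essentially no obstacle: the corollary is just a packaging of Proposition~\ref{prop:strong-maj-vs-p-maj} followed by Theorem~\ref{thm:sufficiencyofpmajorization}. The only subtlety worth flagging in the write-up is the remark that the $\liminf$ condition failing (i.e.\ $s(B)\not\preccurlyeq s(A)$ under $s(B)\prec s(A)$) automatically places us in the non-summable regime where Proposition~\ref{prop:strong-maj-vs-p-maj} has bite, so no separate case analysis is required.
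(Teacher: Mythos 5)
Your argument is correct and is essentially identical to the paper's own proof: apply Proposition~\ref{prop:strong-maj-vs-p-maj} to upgrade $s(B)\prec s(A)$ and $s(B)\not\preccurlyeq s(A)$ to $s(B)\prec_\infty s(A)$, then conclude via Theorem~\ref{thm:sufficiencyofpmajorization} with $p=\infty$. Your added remark that the hypotheses force the non-summable regime is accurate and a nice touch, though the paper omits it.
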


\begin{proof}
  By Proposition~\ref{prop:strong-maj-vs-p-maj} one has $s(B)\prec_\infty s(A)$, and then using Theorem~\ref{thm:sufficiencyofpmajorization} one obtains $B\in E(\mathcal{U}(A))$. 
\end{proof}

\section{Approximate $p$-majorization (necessity)} \label{sec:appr-p-major}

Theorem~\ref{thm:sufficiencyofpmajorization} of the last section shows that if $p\coloneqq{}\trace R_A^\perp -\trace R_B^\perp\ge 0$ or when undefined we set $p=0$, then $p$-majorization ($s(B)\prec_p s(A)$) is a sufficient condition for membership in $E(\mathcal{U}(A))$, but Example~\ref{ex:nonnecessity-of-p-majorization} shows it is not necessary for $0<p<\infty$. 
In our quest to characterize $E(\mathcal{U}(A))$ in terms of sequence majorization, we introduce a new type of majorization called \emph{approximate $p$-majorization}, which is a necessary condition for membership in $E(\mathcal{U}(A))$.

\begin{definition}\label{def:approximatepmajorization}
Given $\xi,\eta\in c_0^+$ and $0\le p<\infty$, we say that $\xi$ is \emph{approximately $p$-majorized} by $\eta$, denoted $\xi\precsim_p\eta$, if $\xi\prec\eta$ and for every $\epsilon>0$, there exists an $N_{p,\epsilon}\in\mathbb{N}$ such that for all $n\ge N_{p,\epsilon}$, 
\[ \sum_{k=1}^{n+p} \xi_k \le  \sum_{k=1}^n \eta_k + \epsilon \eta_{n+1}.\]
Furthermore, if $\xi\precsim_p\eta$ for infinitely many $p\in\mathbb{N}$ (equivalently obviously, for all $p\in\mathbb{N}$), this we call approximate $\infty$-majorization and denote it by $\xi\precsim_\infty\eta$. 
\end{definition}

\begin{remark}\label{rem:relationshipbetweenpmajandapppmaj}
Notice from the above definition that if $\xi$ is $p$-majorized by $\eta$, then $\xi$ is trivially approximately $p$-majorized by $\eta$.
However, there is a partial converse with a small loss in that approximate $p$-majorization implies $(p-1)$-majorization.
That is, if $p>0$ and $\xi$ is approximately $p$-majorized by $\eta$, then by choosing $\epsilon<1$, from the above display
$\xi$ is $(p-1)$-majorized by $\eta$.
Combining these two facts yields that $\xi\prec_\infty\eta$ if and only if $\xi\precsim_\infty\eta$, which is a fact we will exploit later.
Furthermore, as we saw in the proof of Theorem~\ref{thm:sufficiencyofpmajorization} Case 1, if $\eta$ has only finitely many nonzero terms, then any $\xi$ which is majorized by $\eta$ is $\infty$-majorized by $\eta$ and so also approximately $\infty$-majorized by $\eta$.
\end{remark}

\begin{example}
  \label{ex:app-p-maj-not-p-maj}
  It is important to note that $p$-majorization is distinct from approximate $p$-majorization. 
That is, for each $0<p<\infty$ we exhibit sequences $\xi,\eta\in c_0^+$ with $\xi\precsim_p\eta$ but $\xi\not\prec_p\eta$. 
When $p=1$, it suffices to consider the sequences $\xi=\angles{\nicefrac{(2^{k+1}-3)}{2^{2k}}}$ and $\eta=\angles{2^{-k}}$. 
Elementary calculations verify that $\xi,\eta\in c_0^*$ (that is, $\xi=\xi^*$ and $\eta=\eta^*$), and $\xi\precsim_1\eta$ but $\xi\not\prec_1\eta$. 
To produce analogous sequences for any $p>1$, define
\[ \xi^{(p)} \coloneqq{} \langle\overbrace{1,\ldots,1}^{\text{$p-1$ times}},\xi_1,\xi_2,\ldots\rangle \quad\text{and}\quad \eta^{(p)}  \coloneqq{} \angles{p-1,\eta_1,\eta_2,\ldots}. \]
Then $\xi^{(p)}\precsim_p\eta^{(p)}$ but $\xi^{(p)}\not\prec_p\eta^{(p)}$, which proves that $p$-majorization and approximate $p$-majorization are distinct. 
However, it should be noted that these examples were not nearly as easy for us to come by as those for $p$-majorization. 
In particular the examples immediately preceding Remark~\ref{rem:strategy} came naturally, but the single example above took some effort. 
For further discussion on pairs of sequences $\xi\precsim_p\eta$ but $\xi\not\prec_p\eta$ see unifying Remark~\ref{rem:app-p-maj-via-orthostochastic}.
\end{example}

Our main theorem on necessity for membership in $E(\mathcal{U}(A))$ depends on approximate $p$-majorization:

\begin{theorem}\label{thm:necessityofapppmajorization}
  Suppose $A\in K(H)^+$ and $B\in E(\mathcal{U}(A))$.
If 
\[ p = \min \{ n\in\mathbb{N}\cup\{0,\infty\} \mid \trace R_A^\perp \le\trace R_B^\perp +n \}, \]
then $s(B) \precsim_p s(A)$. 
\end{theorem}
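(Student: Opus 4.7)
The strategy is to convert $B = E(UAU^*)$ into a unistochastic matrix identity via Lemma~\ref{lem:kwstochasticcontraction} and then extract a quantitative bound on partial sums of $\beta \coloneqq s(B)$ against $\mu \coloneqq s(A)$. After absorbing a diagonalizing unitary of $A$ into $U$, we may assume $A = \diag\tilde\lambda$ in some basis, so $B = E(W \diag\tilde\lambda\, W^*)$ for a unitary $W$, yielding $d_i \coloneqq B_{ii} = \sum_j Q_{ij}\tilde\lambda_j$ with $Q_{ij} \coloneqq \abs{W_{ij}}^2$ unistochastic. Partition $J = J_+ \sqcup J_0$ according to whether $\tilde\lambda_j > 0$ or $=0$ (so $\abs{J_0} = \trace R_A^\perp$), and $\mathbb{N} = I_+ \sqcup I_0$ according to whether $d_i > 0$ or $=0$ (so $\abs{I_0} = \trace R_B^\perp$). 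The inclusion $\mathcal{U}(A)\subseteq\mathcal{V}(A)$ and Theorem~\ref{thm:kwpartialisometryorbit} give $\beta \prec \mu$, so the case $p = 0$ is immediate; assume $p \ge 1$ henceforth.

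The pivotal observation is that $i \in I_0$ forces $Q_{ij} = 0$ for every $j \in J_+$, so every $I_0$-row has all its mass in the $J_0$-columns, giving $\sum_{i\in I_0,\,j\in J_0}Q_{ij} = \abs{I_0}$, and consequently the total $Q$-mass on $I_+ \times J_0$ equals $\abs{J_0} - \abs{I_0} = \trace R_A^\perp - \trace R_B^\perp$, which is $p$ when $p < \infty$ and $\infty$ when $p = \infty$. Now fix $q \in \mathbb{N}$, taking $q = p$ if $p < \infty$ and $q$ arbitrary if $p = \infty$. For large $n$, let $S_n$ be an index set of size $n+q$ attaining $\sum_{k=1}^{n+q}\beta_k = \max_{\abs{S}=n+q}\sum_{i\in S}d_i$; since $B$ is compact, $d \in c_0^+$, so $S_n \subseteq I_+$ eventually and each fixed $i_0 \in I_+$ eventually lies in $S_n$. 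Writing $\sum_{i\in S_n} d_i = \sum_{j\in J_+} R_j \mu_j$ with $R_j \coloneqq \sum_{i\in S_n} Q_{ij} \in [0,1]$ and $\sum_{j\in J_+} R_j = (n+q) - \delta_n$ where $\delta_n \coloneqq \sum_{i\in S_n,\,j\in J_0} Q_{ij}$, a direct rearrangement inequality using that $\mu$ is decreasing yields
\[ \sum_{k=1}^{n+q} \beta_k \le \sum_{k=1}^n \mu_k + \max\{0, q - \delta_n\}\, \mu_{n+1}, \]
reducing approximate $q$-majorization to the claim that $\delta_n \ge q - \epsilon$ for all sufficiently large $n$.

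When $p < \infty$ (so $q = p$), the sequence $(c_i)_{i\in I_+}$ with $c_i \coloneqq \sum_{j\in J_0} Q_{ij}$ is summable with total $p$; given $\epsilon>0$, pick a finite $F \subseteq I_+$ with $\sum_{i\in I_+ \setminus F} c_i < \epsilon$, and since $F \subseteq S_n$ eventually, $\delta_n \ge p - \epsilon$. When $p = \infty$, the total $I_+ \times J_0$ mass is infinite, so one restricts to a carefully chosen finite subcolumn-set: the sequence $(a_j)_{j\in J_0}$ with $a_j \coloneqq \sum_{i\in I_0} Q_{ij}$ is summable to $\abs{I_0} < \infty$, so one may pick $J_0^{(q)} \subseteq J_0$ of cardinality $q$ with $\sum_{j\in J_0^{(q)}} a_j < \epsilon/2$; then $\sum_{i\in I_+,\,j\in J_0^{(q)}} Q_{ij} = q - \sum_{j\in J_0^{(q)}} a_j > q - \epsilon/2$ is finite, and applying the same tail-summability argument to $(\sum_{j\in J_0^{(q)}} Q_{ij})_{i\in I_+}$ yields $\delta_n \ge \sum_{i\in S_n,\,j\in J_0^{(q)}} Q_{ij} > q - \epsilon$ eventually. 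Degenerate finite-rank cases (where $B$, and hence $A$, has finite rank, forcing $\mu_{n+1} = 0$ for all large $n$) are handled trivially from $\beta \prec \mu$.

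The main obstacle is the $p=\infty$ case: the naive tail-summability argument of the finite case breaks down because the relevant $I_+ \times J_0$ mass is infinite, and the remedy of restricting to a finite subset $J_0^{(q)} \subseteq J_0$ of cardinality $q$ whose $I_0$-residue is small is the essential new ingredient needed to deliver approximate $q$-majorization for each fixed $q \in \mathbb{N}$.
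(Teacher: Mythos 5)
Your proof is correct and takes a genuinely different route from the paper's, though both are in the same computational spirit: extract a doubly stochastic matrix $Q$ from Lemma~\ref{lem:kwstochasticcontraction} and bound partial sums of $\xi=Q\eta'$ against those of $\eta'$. The paper reduces immediately to showing $\precsim_r$ for each finite $r\le p$, then fixes a specific diagonal arrangement of $A$ with exactly $N+r$ leading zeros, introduces an $m$-dependent permutation $\Pi_m$ to normalize the tail of $\eta'$, and obtains the $\epsilon\eta_{m+1}$ error term purely from column-stochastic tail summability applied to the first $N+r$ columns. Your argument instead makes explicit, and exploits, the structural fact that $Q$ vanishes identically on $I_0\times J_+$ (zero rows of $B$ cannot load positive columns of $A$), from which it follows that the total $Q$-mass on $I_+\times J_0$ is exactly $\trace R_A^\perp-\trace R_B^\perp=p$; the $\epsilon\eta_{n+1}$ error then drops out of the rearrangement inequality once $\delta_n\ge q-\epsilon$, which you get by tail summability of row-slices of that block. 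For $p=\infty$ the paper's reduction to finite $r$ rearranges the operator itself, whereas you keep the operator fixed and select a $q$-element subset $J_0^{(q)}\subseteq J_0$ whose $I_0$-residue is small --- morally equivalent but organized differently. What your approach buys is a cleaner conceptual picture (the $I_+\times J_0$ mass budget equals $p$, making the role of the kernel dimensions transparent) at the cost of some bookkeeping around the maximizing sets $S_n$ and ties; what the paper's buys is avoiding any argument about maximizers and working directly with the canonical arrangement. A few points in your write-up are terse but sound: you should note that when $p\ge 1$ Proposition~\ref{prop:cantaddzeros} forces $\abs{I_0}<\infty$ (needed both for $\sum_{i\in I_0,\,j\in J_0}Q_{ij}=\abs{I_0}$ and for the summability of $(a_j)$), that in the infinite-rank case $\xi\prec\eta$ forces $\abs{I_+}=\infty$ so the maximizing $S_n$ lies in $I_+$ and eventually absorbs any fixed finite $F\subseteq I_+$, and that the ``rearrangement inequality'' step requires re-indexing $J_+$ so $\tilde\lambda$ becomes the decreasing $\mu$.
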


\begin{proof}
  Suppose $A,B$ and $p$ are as in the hypotheses of this theorem.
We may assume that $A$ has infinite rank for otherwise the conclusion holds because of Remark~\ref{rem:relationshipbetweenpmajandapppmaj} and Theorem~\ref{thm:sufficiencyofpmajorization} proof of Case 1.

  We may also assume that $N\coloneqq{}\trace R_B^\perp <\infty$.
Otherwise, $\trace R_B^\perp =\infty=\trace R_A^\perp $ by Proposition~\ref{prop:cantaddzeros} and therefore $p=0$.
Thus we would need to prove $s(B)\precsim_0 s(A)$ which is equivalent to $s(B)\prec s(A)$, and this holds by Theorem~\ref{thm:kwpartialisometryorbit}.
  
Note that since $\trace R_B^\perp<\infty$, $p$ satisfies $\trace R_A^\perp = \trace R_B^\perp + p = N+p$. 
It suffices to show $s(B)\precsim_r s(A)$ for all $r\in\mathbb{N}$ with $r\le p$. 
Because $\trace R_A^\perp =N+p\ge N+r$, without loss of generality via unitary equivalence for $A$ and permutations for $B$, and the fact that conjugation by a permutation commutes with the expectation $E$, we may assume that 
\[ A = \diag\langle\underset{N+r}{\underbrace{0,\ldots,0}},\tilde\eta\rangle = \diag\eta' \quad\text{and}\quad B = \diag\langle\underset{N}{\underbrace{0,\ldots,0}},s(B)\rangle = \diag\xi, \] 
where $\tilde\eta$ is $s(A)$ interspersed with $p-r$ zeros. (To aid intuition, in case $p<\infty$ we may choose $r=p$ and hence also $\tilde\eta=s(A)$.)
Since $B\in E(\mathcal{U}(A))$ by Lemma~\ref{lem:kwstochasticcontraction}(iv), there exists a unistochastic matrix $Q=(q'_{ij})$ for which $Q\eta' = \xi$.
However, only double stochasticity of $Q$ is used here.
For all $m\in\mathbb{N}$ one has
\begin{equation}
  \label{eq:1}
  \sum_{i=1}^m \xi_i = \sum_{i=1}^m (Q\eta')_i = \sum_{i=1}^m \sum_{j=1}^\infty q'_{ij}\eta'_j = \sum_{j=1}^\infty \eta'_j \sum_{i=1}^m q'_{ij}.
\end{equation}
For a doubly-stochastic matrix $Q = (q'_{ij})$, denote the last quantity in equation~\eqref{eq:1} as $f_m(Q,\eta')$.
It is clear that for fixed $\eta'$, $f_m(Q,\eta')$ depends solely on the columns of $Q$. 

Now fix any $0<\epsilon<1$, and choose $N+r<N_{r,\epsilon}\in\mathbb{N}$ for which
\begin{equation}
  \label{eq:2}
  N+r-\epsilon < \sum_{i=1}^{N_{r,\epsilon}} \sum_{j=1}^{N+r} q'_{ij}.
\end{equation}
The existence of $N_{r,\epsilon}$ follows from column-stochasticity, which yields 
\[ 
\sum_{i=1}^\infty \sum_{j=1}^{N+r} q'_{ij}=N+r. 
\]
Certainly inequality~(\ref{eq:2}) holds with $N_{r,\epsilon}$ replaced by any $m\ge N_{r,\epsilon}$, since $Q$ is a doubly stochastic matrix and so its entries are nonnegative.

Consider a permutation matrix $\Pi_m$ which fixes the first $N+r$ coordinates and has the property that 
\[ \Pi_m \eta' = \eta =  \langle\underset{N+r}{\underbrace{0,\ldots,0}},\underset{m-N-r+1}{\underbrace{s_1(A),s_2(A),\ldots,s_{m-N-r+1}(A)}},\eta_{m+2},\eta_{m+3},\ldots\rangle, \] 
where $\eta_{m+2},\eta_{m+3},\ldots$ are the remaining $\tilde\eta$-terms. 
(To aid intuition, when $p<\infty$ and we choose $r=p$ and $\tilde\eta=s(A)$, $\eta'$ is already in this form without need of the permutation $\Pi_m$.)
Then notice that $Q$ is both doubly stochastic and satisfies inequality~(\ref{eq:2}) if and only if $Q\Pi_m^{-1}$ does the same.
Notice also that inequality~(\ref{eq:2}) depends only on the first $N+r$ columns of $Q$.
Then direct computations show
\begin{equation}
\label{eq:37}
f_m(Q,\eta') = f_m(Q\Pi_m^{-1} \Pi_m,\eta') = f_m(Q\Pi_m^{-1},\Pi_m\eta') = f_m(Q\Pi_m^{-1}, \eta).
\end{equation}

Denote the entries of $Q\Pi_m^{-1}$ by $q_{ij}$. 
From the definition of $\eta$ above it is clear that $\eta_{m+1}\ge \eta_k$ whenever $k\ge m+1$. 
This yields an upper bound for $f_m(Q\Pi_m^{-1},\eta)$ when $m\ge N_{r,\epsilon}$
\begin{align*}
  f_m(Q\Pi_m^{-1},\eta) &= \sum_{j=1}^\infty \eta_j \sum_{i=1}^m q_{ij}  \\ 
  &= \sum_{j=N+r+1}^m \eta_j \sum_{i=1}^m q_{ij} + \sum_{j=m+1}^\infty \eta_j \sum_{i=1}^m q_{ij}  \\
  &\le \sum_{j=N+r+1}^m \eta_j \sum_{i=1}^m q_{ij} + \Big(\sup_{k>m}\eta_k\Big) \sum_{j=m+1}^\infty \sum_{i=1}^m q_{ij}  \\
  &= \sum_{j=N+r+1}^m \eta_j \sum_{i=1}^m q_{ij} + \eta_{m+1} \left( m - \sum_{j=1}^m \sum_{i=1}^m q_{ij} \right)  \\
  &= \sum_{j=N+r+1}^m \eta_j \sum_{i=1}^m q_{ij} + \eta_{m+1} \left( \sum_{j=1}^m \left( 1-\sum_{i=1}^m q_{ij} \right) \right)  \\
  &= \sum_{j=N+r+1}^m \eta_j \sum_{i=1}^m q_{ij} +   \left( \sum_{j=N+r+1}^m\eta_{m+1} \left( 1-\sum_{i=1}^m q_{ij} \right) \right)  \\
  &\qquad + \left( \sum_{j=1}^{N+r}\eta_{m+1} \left( 1-\sum_{i=1}^m q_{ij} \right) \right) \nonumber \\
  &\le \sum_{j=N+r+1}^m \eta_j \sum_{i=1}^m q_{ij} + \sum_{j=N+r+1}^m \eta_j \left( 1-\sum_{i=1}^m q_{ij} \right)  \\
  &\qquad + \eta_{m+1}\left( N+r - \sum_{j=1}^{N+r} \sum_{i=1}^{m} q_{ij} \right) \nonumber \\
  &<  \sum_{j=N+r+1}^m \eta_j + \epsilon\eta_{m+1} \quad \text{by (\ref{eq:2}).}  
\end{align*}
The inequalities above along with equation \eqref{eq:37} yield
\begin{align*} 
  \sum_{i=1}^{m-N} s_i(B) = \sum_{i=1}^m \xi_i &= f_m(Q,\eta') = f_m(Q\Pi_m^{-1},\eta) \\
  &\le \sum_{j=N+r+1}^m \eta_j + \epsilon \eta_{m+1} \\
  &=  \sum_{i=1}^{m-N-r} s_i(A) + \epsilon s_{m-N-r+1}(A).
\end{align*}
Since $\epsilon$ is arbitrary, $s(B) \precsim_r s(A)$, and since the positive integer $r\le p$ is arbitrary, $s(B) \precsim_p s(A)$. 
\end{proof}

One of our main results is Corollary~\ref{cor:conjecturetrueforinfinitemo} which, in the rather general setting where $A$ has infinite rank and infinite dimensional kernel, we obtain a precise characterization of $E(\mathcal{U}(A))$ in terms of majorization and $\infty$-majorization.

\begin{corollary}\label{cor:conjecturetrueforinfinitemo}
  Suppose $A\in K(H)^+$ has infinite rank and infinite dimensional kernel $(\trace R_A=\infty=\trace R_A^\perp)$.
Then
\[ E(\mathcal{U}(A)) = E(\mathcal{U}(A))_{fk}\sqcup E(\mathcal{U}(A))_{ik}, \]
the members of $E(\mathcal{U}(A))$ with finite dimensional kernel and infinite dimensional kernel, respectively, are characterized by   
\[ E(\mathcal{U}(A))_{fk}=\{ B\in\mathcal{D}\cap K(H)^+\mid s(B)\prec_\infty s(A)\quad\text{and}\quad \trace R_B^\perp<\infty\} \]
and
\[ E(\mathcal{U}(A))_{ik}=\{ B\in\mathcal{D}\cap K(H)^+ \mid s(B)\prec s(A)\quad\text{and}\quad \trace R_B^\perp=\infty\}. \] 
\end{corollary}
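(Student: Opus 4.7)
The plan is to observe that this corollary follows almost immediately from the three theorems already proved: the partial isometry orbit characterization (Theorem~\ref{thm:kwpartialisometryorbit}), the sufficiency of $p$-majorization (Theorem~\ref{thm:sufficiencyofpmajorization}), and the necessity of approximate $p$-majorization (Theorem~\ref{thm:necessityofapppmajorization}), together with Proposition~\ref{prop:cantaddzeros} and Remark~\ref{rem:relationshipbetweenpmajandapppmaj}. The disjoint union decomposition is just the trivial partition of $E(\mathcal{U}(A))$ by the dimension of $\ker B$, which by Proposition~\ref{prop:cantaddzeros} must satisfy $\trace R_B^\perp \le \trace R_A^\perp = \infty$, so every value in $\mathbb{N}\cup\{0,\infty\}$ is a priori allowed. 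Thus the content is proving the two set equalities.

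For $E(\mathcal{U}(A))_{ik}$ (the infinite kernel case), I would first note that if $B\in E(\mathcal{U}(A))$ then $s(B)\prec s(A)$ because $\mathcal{U}(A)\subseteq \mathcal{V}(A)$ and Theorem~\ref{thm:kwpartialisometryorbit} applies to $\mathcal{V}(A)$; this gives the $\subseteq$ inclusion. For $\supseteq$, given $B\in \mathcal{D}\cap K(H)^+$ with $\trace R_B^\perp = \infty$ and $s(B)\prec s(A)$, apply Theorem~\ref{thm:sufficiencyofpmajorization} with $p=0$: the hypotheses $\trace R_B^\perp \le \trace R_A^\perp$ (both $\infty$) and $\trace R_A^\perp \le \trace R_B^\perp + 0$ are automatic, and $s(B)\prec_0 s(A)$ is just $s(B)\prec s(A)$.

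For $E(\mathcal{U}(A))_{fk}$ (the finite kernel case), the key observation driving necessity is that when $\trace R_B^\perp<\infty=\trace R_A^\perp$, no finite $n$ can satisfy $\trace R_A^\perp \le \trace R_B^\perp + n$, so the minimal such $n$ defined in Theorem~\ref{thm:necessityofapppmajorization} is $p=\infty$. Thus Theorem~\ref{thm:necessityofapppmajorization} yields $s(B)\precsim_\infty s(A)$, which by Remark~\ref{rem:relationshipbetweenpmajandapppmaj} is equivalent to $s(B)\prec_\infty s(A)$. Conversely, given $s(B)\prec_\infty s(A)$ with $\trace R_B^\perp<\infty$, I would invoke Theorem~\ref{thm:sufficiencyofpmajorization} with $p=\infty$; the inequality $\trace R_A^\perp \le \trace R_B^\perp + \infty$ is vacuous and $\trace R_B^\perp<\infty=\trace R_A^\perp$ satisfies the other hypothesis.

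There is no real obstacle here; the corollary is essentially a bookkeeping assembly of pieces already developed. The only substantive conceptual point is recognizing that the hypothesis $\trace R_A^\perp=\infty$ is precisely what forces the necessity side to give the strongest possible statement ($p=\infty$ in Theorem~\ref{thm:necessityofapppmajorization}) and simultaneously makes the sufficiency side easily satisfiable (the kernel dimension constraint $\trace R_A^\perp \le \trace R_B^\perp + p$ in Theorem~\ref{thm:sufficiencyofpmajorization} becomes trivial whenever $p=\infty$ or both traces are infinite), so that necessary and sufficient conditions exactly match and a clean characterization emerges, in contrast to the murkier finite-$\trace R_A^\perp$ case referenced in Example~\ref{ex:nonnecessity-of-p-majorization}.
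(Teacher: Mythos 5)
Your proof is correct and follows essentially the same route as the paper: necessity of $s(B)\prec s(A)$ via Theorem~\ref{thm:kwpartialisometryorbit}, necessity of $s(B)\prec_\infty s(A)$ in the finite-kernel case via Theorem~\ref{thm:necessityofapppmajorization} with $p=\infty$ together with the $\precsim_\infty\Leftrightarrow\prec_\infty$ equivalence from Remark~\ref{rem:relationshipbetweenpmajandapppmaj}, and sufficiency via Theorem~\ref{thm:sufficiencyofpmajorization} with $p=0$ and $p=\infty$ in the two respective cases. The only difference is stylistic: the paper phrases the argument as two set-containments between $E(\mathcal{U}(A))$ and the full right-hand side, whereas you organize it as two separate set equalities after the kernel-dimension partition, but the logical content is identical.
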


\begin{proof}
  If $B\in E(\mathcal{U}(A))$, then $B\in E(\mathcal{V}(A))$ and we know that $s(B)\prec s(A)$ by Theorem~\ref{thm:kwpartialisometryorbit}.
But when $\trace R_B^\perp<\infty$, from Theorem~\ref{thm:necessityofapppmajorization} we know that $s(B)\precsim_\infty s(A)$ which is equivalent to $s(B)\prec_\infty s(A)$.
Thus the left-hand set in Corollary~\ref{cor:conjecturetrueforinfinitemo} is contained in the right-hand set.

Next suppose that $B\in \mathcal{D}\cap K(H)^+$ lies in the right-hand set.
If $\trace R_B^\perp =\infty$, then Theorem~\ref{thm:sufficiencyofpmajorization} with $p=0$ shows that $B\in E(\mathcal{U}(A))$.
Similarly, if $\trace R_B^\perp <\infty$, then $s(B)\prec_\infty s(A)$ and again, by  Theorem~\ref{thm:sufficiencyofpmajorization} for $p=\infty$, we find that $B\in E(\mathcal{U}(A))$. 
\end{proof}

Corollary~\ref{cor:conjecturetrueforinfinitemo} can be expressed as 
\[ E(\mathcal{U}(A)) = \]
\[ \left\{ B\in\mathcal{D}\cap K(H)^+ \big\vert \exists\ 0\le p\le\infty\text{ for which }\trace R_B^\perp+p=\infty\text{ and }  s(B)\precsim_p s(A) \right\}. \]
This motivates the following conjectured characterization for $E(\mathcal{U}(A))$, which remains an open problem.
And if this conjecture should prove false, is there a proper majorization characterization of $E(\mathcal{U}(A))$?

\begin{conjecture}
  \label{conj:app-p-maj-schur-horn}
  Let $A\in K(H)^+$.
  Then
  \[ E(\mathcal{U}(A)) = \]
  \[ \bigcup_{0\le p\le \infty} \left\{ B\in\mathcal{D}\cap K(H)^+ \big\vert s(B)\precsim_p s(A)\text{ and } \trace R_B^\perp \le \trace R_A^\perp \le \trace R_B^\perp +p \right\} \]
\end{conjecture}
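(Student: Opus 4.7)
My plan splits into necessity and sufficiency. The inclusion $\subseteq$ is essentially immediate: given $B\in E(\mathcal{U}(A))$, Proposition~\ref{prop:cantaddzeros} yields $\trace R_B^\perp\le\trace R_A^\perp$, so
\[ p\coloneqq\min\{n\in\mathbb{N}\cup\{0,\infty\}\mid \trace R_A^\perp\le\trace R_B^\perp+n\} \]
is well-defined and satisfies $\trace R_B^\perp\le\trace R_A^\perp\le \trace R_B^\perp+p$. Theorem~\ref{thm:necessityofapppmajorization} then provides $s(B)\precsim_p s(A)$, so $B$ lies in the $p$-th term of the union.

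For the reverse inclusion, suppose $B$ lies in the union witnessed by some $p$; since $\precsim_p$ strengthens as $p$ grows, one may take $p$ to be the minimal value compatible with the trace inequality. Several subcases reduce to existing theorems: if $A$ has finite rank then $\mathcal{U}(A)=\mathcal{V}(A)$ (Case~1 of the proof of Theorem~\ref{thm:sufficiencyofpmajorization}), and Theorem~\ref{thm:kwpartialisometryorbit} gives $B\in E(\mathcal{U}(A))$ since $s(B)\precsim_p s(A)$ implies $s(B)\prec s(A)$; if $p=\infty$ then $s(B)\prec_\infty s(A)$ by Remark~\ref{rem:relationshipbetweenpmajandapppmaj}, and Theorem~\ref{thm:sufficiencyofpmajorization} applies; if $p=0$ then the trace inequalities force $\trace R_A^\perp=\trace R_B^\perp$ and Theorem~\ref{thm:sufficiencyofpmajorization} again applies. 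The residual, genuinely open case is thus $A$ of infinite rank with $0<\trace R_A^\perp<\infty$ and $1\le p=\trace R_A^\perp-\trace R_B^\perp<\infty$, where $s(B)\precsim_p s(A)$ may be strictly weaker than $s(B)\prec_p s(A)$ (cf.~Example~\ref{ex:nonnecessity-of-p-majorization}).

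For this residual case my plan is to construct directly a unistochastic matrix $Q$ with $\xi=Q\eta'$ (in the notation of the proof of Theorem~\ref{thm:sufficiencyofpmajorization}) and invoke Lemma~\ref{lem:kwstochasticcontraction}(iv) to realize $B=E(UAU^*)$. Example~\ref{ex:nonnecessity-of-p-majorization} shows that the block-diagonal $2\times 2$ Givens-rotation scheme of Theorem~\ref{thm:sufficiencyofpmajorization} is too rigid: its orthostochastic counterexample has a first column with full support, diluting one kernel-zero of $A$ across many rows of $\xi$. I would therefore use larger, possibly infinite-dimensional mixing blocks, each distributing one of the $p$ excess kernel directions of $A$ across a carefully chosen set of range indices; the mixing parameters are to be selected so that the excess $\sum_{k=1}^{n+p}\xi_k-\sum_{k=1}^n\eta_k\le\epsilon_n\eta_{n+1}$ permitted by approximate (but not exact) $p$-majorization (with $\epsilon_n\downarrow 0$) is absorbed, while the monotonicity requirement analogous to property~(iii) in that proof is preserved.

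The principal obstacle is twofold. First, a direct approximation route---find $B_n\in E(\mathcal{U}(A))$ with $s(B_n)\prec_p s(A)$ and $B_n\to B$, then pass to a limit---is unavailable, since the exact Schur--Horn program underlying Theorem~\ref{thm:kwrangeprojectionidentity} precludes such closure arguments (which the coarser Arveson--Kadison or Neumann closures would obscure); an exact infinite construction of $Q$ is required. Second, the mixing blocks, their row/column supports, and the rotation parameters must be synchronized across infinitely many stages so that the column-sums of $Q$ are identically $1$, every index of $\xi$ is realized exactly, and the construction converges. Extracting a canonical choice of these parameters from the slack bound $\epsilon_n\eta_{n+1}$---perhaps by parametrically generalizing the orthostochastic counterexample of Example~\ref{ex:nonnecessity-of-p-majorization}, or via an iterated Birkhoff-type decomposition on each finite block---is the hardest part and likely the reason the conjecture remains open.
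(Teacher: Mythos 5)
This statement is labeled \emph{Conjecture} in the paper: the authors explicitly leave it as an open problem immediately after Corollary~\ref{cor:conjecturetrueforinfinitemo}, so there is no paper proof against which to compare. Given that, your write-up is an accurate status report rather than a proof, and you are commendably clear about where it stops being one.

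What you do establish is correct and coincides with what the paper actually proves. The inclusion $\subseteq$ follows as you say from Proposition~\ref{prop:cantaddzeros} and Theorem~\ref{thm:necessityofapppmajorization} (the latter is stated precisely so that the minimal $p$ compatible with the trace inequality appears). Your reduction of $\supseteq$ is also right: minimality of $p$ is harmless since $\precsim_p$ strengthens with $p$ while the trace constraint loosens; the finite-rank case collapses via $\mathcal{U}(A)=\mathcal{V}(A)$ and Theorem~\ref{thm:kwpartialisometryorbit}; $p=\infty$ reduces to Theorem~\ref{thm:sufficiencyofpmajorization} via the equivalence $\precsim_\infty\iff\prec_\infty$ from Remark~\ref{rem:relationshipbetweenpmajandapppmaj}; and $p=0$ reduces to $\prec$ and Theorem~\ref{thm:kwrangeprojectionidentity}. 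This exactly reproduces the terrain the paper covers (Theorem~\ref{thm:sufficiencyofpmajorization}, Corollary~\ref{cor:conjecturetrueforinfinitemo}) and isolates the genuinely open region: $A$ of infinite rank with $0<\trace R_A^\perp<\infty$ and $1\le p=\trace R_A^\perp-\trace R_B^\perp<\infty$, where Example~\ref{ex:nonnecessity-of-p-majorization} shows $\precsim_p$ is strictly weaker than $\prec_p$.

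The gap, which you name yourself, is that nothing is proved in that residual case. Your plan --- replace the $2\times 2$ Givens-rotation blocks of Theorem~\ref{thm:sufficiencyofpmajorization} by larger (possibly infinite) mixing blocks that spread each excess kernel direction across many indices, tuned against the slack $\epsilon\,\eta_{n+1}$ allowed by $\precsim_p$ --- is the right shape of idea; it is essentially the suggestion made in Remark~\ref{rem:app-p-maj-via-orthostochastic} about parametrically varying the orthostochastic matrix of Example~\ref{ex:nonnecessity-of-p-majorization}. But no construction is exhibited, no synchronization of the block parameters is given, and no verification that column sums are exactly $1$ while $\xi=Q\eta'$ holds entrywise. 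You explicitly flag this as ``likely the reason the conjecture remains open,'' which is consistent with the paper: the authors leave it open and even ask whether a different majorization characterization might be needed if this one fails. So as a proof attempt the proposal is incomplete by its own admission; as an analysis of the conjecture's status, it matches the paper's.
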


The following remark provides a method for producing pairs of sequences $\xi\in c_0^+$ and $\eta\in c_0^*$ for which $\xi\precsim_p\eta$ but $\xi\not\prec_p\eta$. 

\begin{remark}
  \label{rem:app-p-maj-via-orthostochastic}
  Recall that Example~\ref{ex:nonnecessity-of-p-majorization} provided an orthostochastic matrix $Q$ such that for every $\eta\in c_0^*$, setting $\xi = Q\tilde\eta$, where $\tilde\eta=\angles{0,\eta}$, yields $\xi\not\prec_1\eta$. 
Using Lemma~\ref{lem:kwstochasticcontraction} one has $\diag\xi\in E(\mathcal{U}(\diag\eta))$.
By Theorem~\ref{thm:necessityofapppmajorization}, 
\[ \xi^* = s(\diag\xi) \precsim_1 s(\diag\tilde\eta) = \tilde\eta^*=\eta, \]
hence $\xi \precsim_1 \eta$. 
In general, given two finite sequences, say $\phi,\zeta$, of lengths $n$ and $n+p$ (where $n$ is arbitrary) and having the same sum (i.e., $\sum_{j=1}^n \phi_j = \sum_{j=1}^{n+p}\zeta_j$), one can prepend $\phi$ to $\eta$ and $\zeta$ to $\xi$, clearly obtaining new sequences $\xi',\eta'$ with $\xi' \precsim_p \eta'$ but $\xi' \not\prec_p \eta'$. 

Although we did not mention it earlier, in Example~\ref{ex:nonnecessity-of-p-majorization} there is a substantial amount of freedom in choosing $Q$. 
In particular, examination of \cite[Example~6.11]{KW-2010-JoFA} ensures that each sum-1 strictly positive column vector followed by a Gram--Schmidt process produces a distinct orthostochastic matrix $Q$ that can be used in Example~\ref{ex:nonnecessity-of-p-majorization}.
Perhaps these orthostochastic $Q$ can be exploited or modified to prove Conjecture~\ref{conj:app-p-maj-schur-horn}. 
\end{remark}

\section{$E(\mathcal{U}(A))$ convexity}
\label{sec:conv-expect-unit}

Historically, convexity played a central role and is ubiquitous in majorization theory. For example, Horn \cite[Theorem 1]{Hor-1954-AJoM} integrates theorems of Hardy, Littlewood and P\'{o}lya \cite{Hardy.etal-1988} and Birkhoff \cite{Bir-1946-UNTRA} to prove
\[ \{\xi\in\mathbb{R}^n \mid \xi\prec\eta\} = \convex \{\tilde\eta\in\mathbb{R}^n \mid \tilde\eta_k = \eta_{\pi(k)}, \pi\in\Pi_n\}, \]
where $\Pi_n$ is the set of $n\times n$ permutation matrices. 
For operators, in \cite{Hor-1954-AJoM}, using \cite{Sch-1923-SBMG}, Horn proved that $E(\mathcal{U}(X))$ is convex whenever $X=X^*\in M_n(\mathbb{C})$ by establishing the characterization
\[ E(\mathcal{U}(X)) = \{ \diag d  \mid d\in\mathbb{R}^n, d\prec\lambda\}, \]
where $\lambda$ is the eigenvalue sequence of $X$ (Theorem~\ref{thm:schur-horn}).
However, the verification that $E(\mathcal{U}(X))$ is convex is immediate from its majorization characterization even without the theorem of Horn integrating Hardy, Littlewood, P\'olya and Birkhoff mentioned above.
Likewise it is straightforward to verify that if $\eta\in c_0^+$, then
\[ \{ \xi\in c_0^+ \mid \xi\prec\eta \} \]
is convex.
In particular, this leads to the results of Kaftal and Weiss on the convexity of the expectation of the partial isometry orbit of a positive compact operator.

\begin{theorem}[\protect{\cite[Corollary 6.7]{KW-2010-JoFA}}]\label{thm:convexityofexpectationofpartialisometryorbit}
  Let $A\in K(H)^+$.
Then 
  \begin{itemize}[itemsep=0pt]
  \item $E(\mathcal{V}(A))$ is convex.
  \item If $R_A=I$ or $A$ has finite rank, then $E(\mathcal{U}(A))$ is convex. 
  \end{itemize}
\end{theorem}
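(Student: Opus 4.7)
My plan is to reduce both statements to convexity at the sequence level via the majorization characterization already established. By Theorem~\ref{thm:kwpartialisometryorbit},
\[ E(\mathcal{V}(A)) = \{B\in\mathcal{D}\cap K(H)^+ \mid s(B)\prec s(A)\}. \]
A convex combination $\lambda B_1 + (1-\lambda)B_2$ of operators in $\mathcal{D}\cap K(H)^+$ is again in $\mathcal{D}\cap K(H)^+$ with diagonal $\lambda\xi^{(1)}+(1-\lambda)\xi^{(2)}$, where $\xi^{(i)}$ denotes the diagonal of $B_i$; and for a positive diagonal operator, $s(B)$ is just the monotonization $\xi^*$. Hence the first bullet reduces to showing that $\{\xi\in c_0^+ \mid \xi\prec s(A)\}$ is convex.

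The main tool would be the standard Ky--Fan type inequality
\[ \sum_{j=1}^n (\phi+\psi)^*_j \le \sum_{j=1}^n \phi^*_j + \sum_{j=1}^n \psi^*_j \qquad (\phi,\psi\in c_0^+), \]
which follows from the supremum characterization $\sum_{j=1}^n \phi^*_j = \sup\{\sum_{j\in I}\phi_j : |I|=n\}$. Applied to $\lambda\xi^{(1)}+(1-\lambda)\xi^{(2)}$ with $\xi^{(i)}\prec s(A)$ and $\lambda\in[0,1]$, this yields
\[ \sum_{j=1}^n \bigl(\lambda\xi^{(1)}+(1-\lambda)\xi^{(2)}\bigr)^*_j \le \lambda\sum_{j=1}^n (\xi^{(1)})^*_j + (1-\lambda)\sum_{j=1}^n (\xi^{(2)})^*_j \le \sum_{j=1}^n s(A)^*_j, \]
and the equal-total-sum requirement in Definition~\ref{def:majorization} is preserved by linearity of summation (both in the summable case, where all three total sums agree, and in the non-summable case, where all three total sums are $\infty$). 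This proves the first bullet.

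For the second bullet, the finite rank case is immediate: as noted in Case 1 of the proof of Theorem~\ref{thm:sufficiencyofpmajorization}, one has $\mathcal{U}(A)=\mathcal{V}(A)$, so $E(\mathcal{U}(A))=E(\mathcal{V}(A))$ inherits convexity from the first bullet. For the case $R_A = I$, Theorem~\ref{thm:kwrangeprojectionidentity} gives
\[ E(\mathcal{U}(A)) = E(\mathcal{V}(A)) \cap \{B\in\mathcal{D} \mid R_B = I\}, \]
an intersection of two convex sets. Convexity of the second is straightforward: among positive diagonal operators, $R_B=I$ is equivalent to every diagonal entry being strictly positive, and if $\xi^{(1)},\xi^{(2)}$ both have all strictly positive entries then so does every convex combination.

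I do not foresee any substantial obstacle: the first bullet reduces to the classical rearrangement triangle inequality, and the second bullet is a direct consequence of the first combined with Theorems~\ref{thm:kwpartialisometryorbit} and \ref{thm:kwrangeprojectionidentity}, which do the heavy lifting. The only subtlety worth flagging is the equal-total-sum condition of Definition~\ref{def:majorization} in the non-summable setting, and this is handled automatically by the linearity argument above.
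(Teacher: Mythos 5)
Your proof is correct and is essentially the same approach the paper indicates in the paragraph preceding the theorem: the convexity of $E(\mathcal{V}(A))$ reduces, via the majorization characterization of Theorem~\ref{thm:kwpartialisometryorbit}, to the convexity of $\{\xi\in c_0^+ \mid \xi\prec\eta\}$, which the paper declares "straightforward to verify" and which your Ky--Fan argument verifies; the second bullet then follows from $\mathcal{U}(A)=\mathcal{V}(A)$ in the finite-rank case and from Theorem~\ref{thm:kwrangeprojectionidentity} in the $R_A=I$ case. The one phrase to tighten is "an intersection of two convex sets": the set $\{B\in\mathcal{D}\mid R_B=I\}$ is not convex within all of $\mathcal{D}$ (e.g.\ $\tfrac{1}{2}(I+(-I))=0$), but since you immediately give the direct argument that works within the positive operators of $E(\mathcal{V}(A))$, the conclusion stands.
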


Since we have a characterization of $E(\mathcal{U}(A))$ when $A\in K(H)^+$ has both infinite rank and infinite dimensional kernel, it seems natural to ask if $E(\mathcal{U}(A))$ is convex in this case.
The answer is positive, however the verification is much less obvious to us (see below Corollary~\ref{cor:convexityofexpectationofunitaryorbit}). But first, a lemma.

\begin{lemma}\label{lem:convexityofpqsequences}
  Suppose that $\xi,\zeta,\eta\in c_0^+$, $0<\lambda<1$, $0\le p,q\le\infty$ such that $\xi\precsim_p\eta$, $\zeta\precsim_q\eta$.
If $r=\min\{ p+\abs{\xi^{-1}(0)\setminus\zeta^{-1}(0)}, q+\abs{\zeta^{-1}(0)\setminus\xi^{-1}(0)}\}$, then $\lambda\xi+(1-\lambda)\zeta\precsim_r \eta$.
\end{lemma}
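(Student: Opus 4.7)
The plan is to verify the two requirements of $\omega \precsim_r \eta$ in Definition~\ref{def:approximatepmajorization}, where $\omega \coloneqq{} \lambda\xi + (1-\lambda)\zeta$. By the symmetry of the statement under $(\xi, p, a, \lambda) \leftrightarrow (\zeta, q, b, 1-\lambda)$, with $a = \abs{\xi^{-1}(0)\setminus\zeta^{-1}(0)}$ and $b = \abs{\zeta^{-1}(0)\setminus\xi^{-1}(0)}$, I may assume without loss of generality that $r = p + a$, and hence also $r \le q + b$.

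First I would show $\omega \prec \eta$. Both $\precsim_p$ and $\precsim_q$ include ordinary majorization, so $\xi \prec \eta$ and $\zeta \prec \eta$. The equality of total sums $\sum\omega = \lambda\sum\xi + (1-\lambda)\sum\zeta = \sum\eta$ is automatic, and the standard subadditivity $\sum_{k=1}^n (\phi+\psi)^*_k \le \sum_{k=1}^n \phi^*_k + \sum_{k=1}^n \psi^*_k$ of partial sums of decreasing rearrangements propagates the majorization partial-sum inequality through the convex combination, yielding $\omega \prec \eta$.

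For the shifted partial-sum condition, I would fix $\epsilon > 0$ and split
\[
\sum_{k=1}^{n+r}\omega_k = \lambda\sum_{k=1}^{n+r}\xi_k + (1-\lambda)\sum_{k=1}^{n+r}\zeta_k,
\]
bounding each term via a shifted application of its approximate majorization hypothesis. The key observation is that, when $a$ is finite, the set $I \coloneqq{} \xi^{-1}(0)\setminus\zeta^{-1}(0)$ lies inside $[1, n+r]$ for all large $n$, so the partial sum $\sum_{k=1}^{n+r}\xi_k$ depends only on the $n+p = n+r-a$ positions in $[1, n+r]\setminus I$. Applying $\xi \precsim_p \eta$ at the shifted index $m = n + a$ (so that $m+p = n+r$) gives $\sum_{k=1}^{n+r}\xi_k \le \sum_{k=1}^{n+a}\eta_k + \epsilon\eta_{n+a+1}$; symmetrically, $r - q \le b$ lets one shift $\zeta \precsim_q \eta$ to bound $\sum_{k=1}^{n+r}\zeta_k$.

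The main obstacle is reconciling these two shifted bounds. Individually each introduces excess of the form $\sum_{k=n+1}^{n+a}\eta_k$ on the $\xi$ side (and $\sum_{k=n+1}^{n+r-q}\eta_k$ on the $\zeta$ side when $r > q$), which is not $O(\epsilon\eta_{n+1})$ in general. The argument must therefore extract genuine slack from the convex combination: the $a$ zeros of $\xi$ at positions in $I \subseteq \supp\zeta$ force $\sum_{k=1}^{n+r}\xi_k$ to fall strictly below its naive upper bound by an amount that should compensate the $\zeta$-side excess. I expect this cancellation to be made precise by re-expressing each partial sum in terms of the compressed sequences $\tilde\xi, \tilde\zeta$ (obtained by deleting the $a$ and $b$ relevant zeros) and then telescoping, in the spirit of the compressed-sequence construction used in the proof of Theorem~\ref{thm:sufficiencyofpmajorization}. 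Finally, the cases $a = \infty$ or $b = \infty$ force $r$ to the finite side, and combined with $\xi \prec \eta$ or $\zeta \prec \eta$ they typically impose summability constraints on $\eta$ that trivialize the verification for large $n$; these would be treated in a short separate argument.
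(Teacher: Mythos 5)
Your proposal identifies the essential obstacle but does not resolve it, so there is a real gap. The fixed-shift application of $\xi\precsim_p\eta$ at $m=n+a$ (so $m+p=n+r$) introduces excess $\lambda\sum_{k=n+1}^{n+a}\eta_k$ that, as you yourself note, is not $O(\epsilon\eta_{n+1})$, and your sketch of a compressed-sequence cancellation argument ``in the spirit of Theorem~\ref{thm:sufficiencyofpmajorization}'' is exactly the step that needs a new idea and is not supplied. Moreover, the sequences $\xi,\zeta\in c_0^+$ in the lemma are in general far from monotone (they are meant for diagonal sequences of positive compact operators, as in the application to Corollary~\ref{cor:convexityofexpectationofunitaryorbit}), so $\sum_{k=1}^{n+r}\xi_k$ over consecutive indices and $\sum_{k=1}^{n+p}\xi^*_k$ over the $n+p$ largest values are not readily comparable, and the zeros of $\xi$ in $\xi^{-1}(0)\cap\zeta^{-1}(0)$ are not accounted for by your counting. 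What is missing is a controlled rearrangement argument.

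The paper's proof succeeds precisely because the shift is \emph{variable}, not fixed. Writing $\omega=\lambda\xi+(1-\lambda)\zeta$, it introduces the monotonizing bijection $\pi\colon\mathbb{N}\to\mathbb{N}\setminus\omega^{-1}(0)$ with $\omega^*_k=\omega_{\pi(k)}$ and, for each $m$, counts the number $s_m$ (resp.\ $t_m$) of indices in $\pi([1,m])$ at which $\xi$ (resp.\ $\zeta$) vanishes. Since $s_m$ of the $m$ terms $\xi_{\pi(1)},\ldots,\xi_{\pi(m)}$ are zero, one obtains
\[
\sum_{k=1}^m\omega^*_k \le \lambda\sum_{k=1}^{m-s_m}\xi^*_k + (1-\lambda)\sum_{k=1}^{m-t_m}\zeta^*_k.
\]
After verifying $m-s_m,m-t_m\uparrow\infty$ (using that $\xi,\zeta\prec\eta$ with $\eta$ infinitely supported forces $\xi,\zeta$ infinitely supported), the hypotheses are applied at the variable indices $m-s_m-p$ and $m-t_m-q$, combining to $\sum_{k=1}^m\omega^*_k\le\sum_{k=1}^{m-r_m}\eta^*_k+\epsilon\eta^*_{m-r_m+1}$ with $r_m=\min\{s_m+p,t_m+q\}$. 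Because the $\eta$-index is $m-r_m$ rather than $m-r$, no residual excess remains to be cancelled, and the conclusion follows from $r_m\uparrow r$ (which either stabilizes at $r$ or tends to $\infty$). Finally, the cases $p=\infty$ or $q=\infty$ are treated by running this argument with finite truncations $p'\le p$, $q'\le q$ and letting them grow; this has nothing to do with any ``summability constraints on $\eta$,'' which are not imposed by the hypotheses and indeed fail in the intended non-trace-class applications.
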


\begin{proof}
  Set $\phi\coloneqq{}\lambda\xi+(1-\lambda)\zeta$. 
There are two cases: either $\eta$ has finite support, or not. 
If the former, then since $\xi\prec\eta$ and $\zeta\prec\eta$, one easily has $\phi\prec\eta$ by the comment immediately preceding Theorem~\ref{thm:convexityofexpectationofpartialisometryorbit}. 
Then since $\eta$ has finite support, we can improve this to $\phi\prec_\infty\eta$ (see Remark~\ref{rem:relationshipbetweenpmajandapppmaj} or proof of Theorem~\ref{thm:sufficiencyofpmajorization} Case 1), which is equivalent to $\phi\precsim_\infty\eta$. 
Thus $\phi\precsim_r\eta$.

The second case: $\eta$ has infinite support. 
For now, suppose both $p,q$ are finite.
Let $\pi:\mathbb{N}\to\mathbb{N}\setminus\phi^{-1}(0)$ be a bijection monotonizing $\phi$: $\phi^*_k = \phi_{\pi(k)}$.
Since $\phi^{-1}(0)=\xi^{-1}(0)\cap\zeta^{-1}(0)$, one has
\begin{equation}
  \label{eq:34}
  \mathbb{N}\setminus\phi^{-1}(0) = \left(\mathbb{N}\setminus(\xi^{-1}(0)\cup\zeta^{-1}(0))\right) \sqcup \left(\xi^{-1}(0)\setminus\zeta^{-1}(0)\right) \sqcup \left(\zeta^{-1}(0)\setminus\xi^{-1}(0)\right),
\end{equation}
which immediately yields the disjoint union
\begin{align*}
  \pi([1,m]) 
&=\, \left[\pi([1,m])\cap\left(\mathbb{N}\setminus(\xi^{-1}(0)\cup\zeta^{-1}(0))\right)\right] \qquad \text{(cardinality $k_m$)} \\
&\qquad \quad \sqcup \left[\pi([1,m])\cap\left(\xi^{-1}(0)\setminus\zeta^{-1}(0)\right)\right] \qquad \text{(cardinality $s_m$)}\\
&\qquad  \quad \sqcup \left[\pi([1,m])\cap\left(\zeta^{-1}(0)\setminus\xi^{-1}(0)\right)\right] \qquad \text{(cardinality $t_m$)}
\end{align*}
for each $m\in\mathbb{N}$.
Therefore $m = k_m + s_m +t_m$, and each term increases to the cardinality of its corresponding set from equation~(\ref{eq:34}). 
From this cardinality equation, it is clear that $m-s_m=k_m+t_m$ and $m-t_m=k_m+s_m$ are both increasing sequences. 
However, we may further conclude that they increase without bound. 
Indeed, 
\begin{align*}
  m-s_m &= \abs{\pi([1,m])\cap\left(\mathbb{N}\setminus(\xi^{-1}(0)\cup\zeta^{-1}(0))\right)}\\ 
  &\qquad + \abs{\pi([1,m])\cap\left(\xi^{-1}(0)\setminus\zeta^{-1}(0)\right)} \\
  &= \abs{\pi([1,m])\cap\left(\mathbb{N}\setminus\zeta^{-1}(0)\right)}, 
\end{align*}
which shows $m-s_m\uparrow\abs{\mathbb{N}\setminus\zeta^{-1}(0)}=\abs{\supp\zeta}$.
Likewise $m-t_m\uparrow\abs{\supp\xi}$. 
And $\xi,\zeta$ are both infinitely supported since they are majorized by $\eta$ and $\eta$ is infinitely supported (as shown simply in the proof of Theorem~\ref{thm:sufficiencyofpmajorization} Case 2, first paragraph).
Thus we have verified $m-s_m,m-t_m\uparrow\infty$.
Therefore, given $0<\epsilon<1$, once $m$ is sufficiently large so that $m-s_m\ge N_{p,\epsilon}$ and $m-t_m\ge N_{q,\epsilon}$, one has
\begin{align*}
  \sum_{k=1}^m \phi^*_k &= \lambda \sum_{k=1}^m \xi_{\pi(k)} + (1-\lambda) \sum_{k=1}^m \zeta_{\pi(k)} \\
  &\le \lambda\sum_{k=1}^{m-s_m} \xi^*_k + (1-\lambda)\sum_{k=1}^{m-t_m} \zeta^*_k \\
  &\le \lambda\left(\sum_{k=1}^{m-s_m-p} \eta^*_k + \epsilon \eta_{m-s_m-p+1}\right) \\
  &\quad + (1-\lambda)\left(\sum_{k=1}^{m-t_m-q}\eta^*_k + \epsilon \eta^*_{m-t_m-q+1}\right) \\
  &\le \sum_{k=1}^{m-r_m} \eta^*_k + \epsilon \eta^*_{m-r_m+1}, 
\end{align*}
where $r_m = \min\{ s_m+p , t_m + q \}$.
The above computation proves that $\phi\precsim_{r_m}\eta$.
But since $r_m\uparrow r$, either $r_m\uparrow\infty$, or eventually $r_m=r$. In either case $\phi\precsim_r\eta$. 

Finally, to remove the restriction that $p,q$ be finite, observe that the above proof actually showed that for any $p'\le p$, $q'\le q$ with $p',q'<\infty$, one has $\phi\precsim_{r'_m}\eta$, where $r'_m = \min\{s_m+p',t_m+q'\}$. The proof now splits into several subcases.

If $p<\infty$, $q=\infty$ and $\abs{\xi^{-1}(0)\setminus\zeta^{-1}(0)}<\infty$, then if one chooses both $p'=p$ and $q'=p+\abs{\xi^{-1}(0)\setminus\zeta^{-1}(0)}$, one has $r'_m=r=p+\abs{\xi^{-1}(0)\setminus\zeta^{-1}(0)}$ for $m$ sufficiently large so that $s_m = \abs{\xi^{-1}(0)\setminus\zeta^{-1}(0)}$.

If $p<\infty$, $q=\infty$ and $\abs{\xi^{-1}(0)\setminus\zeta^{-1}(0)}=\infty$, then for $p'=p$ and any $q'<\infty$, eventually $r'_m$ reaches $q'$ since $s_m\uparrow\infty$. Therefore $\phi\precsim_{q'}\eta$, which means $\phi\precsim_\infty\eta$ since $q'$ was arbitrary.

The cases where $p=\infty$ and $q<\infty$ hold by symmetric arguments.

If $p=\infty=q$, then for $p'=q'=k\in\mathbb{N}$, one has $r'_m\ge k$ and so $\phi\precsim_k\eta$, hence $\phi\precsim_\infty\eta$. 
\end{proof}

Examination of the proof of Lemma~\ref{lem:convexityofpqsequences} actually shows that we may replace approximate $p$-majorization with $p$-majorization everywhere in the statement of the lemma and the result remains valid.
Indeed, the only difference in the proof is that the terms involving $\epsilon$ disappear when $p$-majorization is used. 

An operator $E(\mathcal{U}(A))$ consequence of this is:

\begin{corollary}\label{cor:convexityofexpectationofunitaryorbit}
  If $A\in K(H)^+$ and $\trace R_A=\infty=\trace R_A^\perp $, then $E(\mathcal{U}(A))$ is convex. 
\end{corollary}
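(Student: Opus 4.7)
The plan is to exploit the precise description of $E(\mathcal{U}(A))$ given by Corollary~\ref{cor:conjecturetrueforinfinitemo} to reduce the operator-level convexity question to a diagonal-sequence statement handled by Lemma~\ref{lem:convexityofpqsequences}. Fix $B_1,B_2\in E(\mathcal{U}(A))$ and $\lambda\in(0,1)$, and set $B=\lambda B_1+(1-\lambda)B_2\in\mathcal{D}\cap K(H)^+$. Denote by $\xi,\zeta\in c_0^+$ the diagonal sequences of $B_1,B_2$, so that the diagonal of $B$ is $\phi:=\lambda\xi+(1-\lambda)\zeta$. Because $B_1,B_2\ge 0$ and $\lambda,1-\lambda>0$, $\ker B=\ker B_1\cap\ker B_2$ and therefore $\trace R_B^\perp=\abs{\xi^{-1}(0)\cap\zeta^{-1}(0)}$. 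By Corollary~\ref{cor:conjecturetrueforinfinitemo}, to show $B\in E(\mathcal{U}(A))$ it suffices to verify either (i) $\trace R_B^\perp=\infty$ together with $s(B)\prec s(A)$, or (ii) $\trace R_B^\perp<\infty$ together with $s(B)\prec_\infty s(A)$.

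Case (i) is the well-known convexity of $\{\xi\in c_0^+ \mid \xi\prec s(A)\}$: for each $n$, standard rearrangement gives $\sum_{j=1}^n\phi^*_j\le\lambda\sum_{j=1}^n\xi^*_j+(1-\lambda)\sum_{j=1}^n\zeta^*_j\le\sum_{j=1}^n s(A)_j$, and trace-preservation of $E$ delivers the equality of total sums. Case (ii) is the main content. By Corollary~\ref{cor:conjecturetrueforinfinitemo}, each $B_i$ lies in either $E(\mathcal{U}(A))_{fk}$ or $E(\mathcal{U}(A))_{ik}$; assign $p_i=\infty$ in the former (using $s(B_i)\prec_\infty s(A)\iff s(B_i)\precsim_\infty s(A)$ from Remark~\ref{rem:relationshipbetweenpmajandapppmaj}) and $p_i=0$ in the latter (using $s(B_i)\prec s(A)$, i.e., $s(B_i)\precsim_0 s(A)$). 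Applying Lemma~\ref{lem:convexityofpqsequences} to $\xi,\zeta,s(A)$ with these parameters yields $\phi\precsim_r s(A)$, where
\[ r=\min\bigl\{p_1+\abs{\xi^{-1}(0)\setminus\zeta^{-1}(0)},\ p_2+\abs{\zeta^{-1}(0)\setminus\xi^{-1}(0)}\bigr\}. \]
A short subcase check, driven by the Case (ii) hypothesis $\abs{\xi^{-1}(0)\cap\zeta^{-1}(0)}<\infty$, shows $r=\infty$ in every possibility: whenever some $B_i\in E(\mathcal{U}(A))_{fk}$, the corresponding $p_i=\infty$ immediately makes that argument of the minimum infinite; if instead $B_j\in E(\mathcal{U}(A))_{ik}$, then $\abs{\xi_j^{-1}(0)}=\infty$ combined with the finite intersection forces $\abs{\xi_j^{-1}(0)\setminus\xi_{3-j}^{-1}(0)}=\infty$, handling the other argument. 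Thus $\phi\precsim_\infty s(A)$, and passing to the monotonization $s(B)=\phi^*$ together with Remark~\ref{rem:relationshipbetweenpmajandapppmaj} gives $s(B)\prec_\infty s(A)$, completing the verification via Corollary~\ref{cor:conjecturetrueforinfinitemo}.

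The principal obstacle is the bookkeeping in Case (ii): parsing the four possible $fk$/$ik$ combinations for $B_1,B_2$ and verifying $r=\infty$ in each. The decisive observation is that the hypothesis $\trace R_B^\perp<\infty$ constrains $\xi^{-1}(0)\cap\zeta^{-1}(0)$ to be finite, and this is exactly what makes the formula for $r$ in Lemma~\ref{lem:convexityofpqsequences} collapse to $\infty$ regardless of the individual $p_i$. In other words, the lemma's $r$-formula is calibrated precisely to this operator-convexity situation, and the whole argument amounts to translating between the operator and sequence sides of Corollary~\ref{cor:conjecturetrueforinfinitemo} and invoking the lemma on the sequence side.
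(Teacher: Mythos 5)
Your argument is correct and follows essentially the same route as the paper's: reduce via the exact description in Corollary~\ref{cor:conjecturetrueforinfinitemo}, split on whether $\trace R_B^\perp$ is infinite (where plain convexity of majorization suffices) or finite (where Lemma~\ref{lem:convexityofpqsequences} is invoked with $p_i$ assigned as $0$ or $\infty$ according to the $ik$/$fk$ dichotomy), and check that the lemma's $r$ is forced to be $\infty$ because the two kernel supports have finite intersection. The paper's proof packages this last step as ``either $\abs{b^{-1}(0)\setminus d^{-1}(0)}=\infty$ or $\trace R_B^\perp<\infty$,'' which is the same case analysis you carry out.
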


\begin{proof}
  Take $B,C\in E(\mathcal{U}(A))$ and $0<\lambda<1$.
Let $D=\lambda B+(1-\lambda) C$ and also let $d=\lambda b +(1-\lambda) c$ be their corresponding diagonal sequences in $c_0^+$.
If $\trace R_D^\perp =\infty$, that $D\in E(\mathcal{U}(A))$ follows immediately from Corollary~\ref{cor:conjecturetrueforinfinitemo} and from the fact that convex combinations of elements majorized by $\eta$ are themselves majorized by $\eta$.
However, if $\trace R_D^\perp <\infty$, we use the dichotomy of Corollary~\ref{cor:conjecturetrueforinfinitemo} for $B,C$. 
Observing that $\trace R_B^\perp  = \abs{b^{-1}(0)}$ and similarly for $C,D$, simply notice that either $\abs{b^{-1}(0)\setminus d^{-1}(0)}=\infty$ or $\abs{b^{-1}(0)\setminus d^{-1}(0)}<\infty$ (so $\trace R_B^\perp<\infty$), in which case $b\prec_\infty a$ by Corollary~\ref{cor:conjecturetrueforinfinitemo}.
Likewise, $\abs{c^{-1}(0)\setminus d^{-1}(0)}=\infty$ or $c\prec_\infty a$.
Thus by Lemma~\ref{lem:convexityofpqsequences} one has $d\prec_\infty a$.
And so by Corollary~\ref{cor:conjecturetrueforinfinitemo}, $D\in E(\mathcal{U}(A))$. 
\end{proof}

\bibliography{References}{}
\bibliographystyle{elsarticle-num}

\end{document}